\documentclass[11pt]{amsart}

\usepackage{amsmath,amssymb,amsthm}
\usepackage[margin=1.15in]{geometry}

\newtheorem{theorem}{Theorem}[section]
\newtheorem{proposition}[theorem]{Proposition}
\newtheorem{lemma}[theorem]{Lemma}
\newtheorem{corollary}[theorem]{Corollary}
\newtheorem{remark}[theorem]{Remark}

\newcommand{\R}{\mathbb R}
\newcommand{\HH}{\mathbf H}

\newcommand{\inner}[2]{\left\langle #1,#2\right\rangle}

\title{Rigidity of Ricci-Pinched Complete Self-Shrinkers in Arbitrary Codimension}
\author{Juan Li}
\address{School of Mathematics, Hangzhou Normal University, Hangzhou, China}
\email{juanli@zju.edu.cn}

\author{Zhiyuan Xu}
\address{School of Mathematics, Hangzhou Normal University, Hangzhou, China}
\email{xuzy@hznu.edu.cn}
\thanks{Zhiyuan Xu is the corresponding author.}

\keywords{Rigidity, self-shrinkers, Ricci curvature, mean curvature, squared normal curvature}

\begin{document}

\begin{abstract}
Let $M$ be an $n$-dimensional complete connected self-shrinker in $\R^{n+p}$. Denote by $\operatorname{Ric}$ and $\HH$ the Ricci curvature tensor and the mean curvature vector of $M$, respectively. We prove that if the self-shrinker $M$ satisfies $\operatorname{Ric}\ge(\frac{n-2}{n^2}+\varepsilon_n)|\HH|^2g$, then $M$ is either a linear subspace or a round shrinking sphere, where $\varepsilon_n$ is an explicit positive constant equal to $\frac{1}{3n^2}+O\left(\frac{1}{n^5}\right)$. Moreover, we obtain a rigidity theorem that is sharp in codimension two for the self-shrinker satisfying $\operatorname{Ric}\ge\frac{n-2}{n^2}|\HH|^2g$.
\end{abstract}

\maketitle

\section{Introduction}

The mean curvature flow is an evolution equation where a submanifold evolves over time by locally moving in the direction of steepest descent for the volume element. An immersed submanifold $x:M^n\to\R^{n+p}$ is called a self-shrinker if it satisfies the equation
$$
  \HH=-\lambda x^\perp,\qquad \lambda>0 .
$$
This is equivalent, under a suitable spatial rescaling, to the standard forms $\HH=-x^\perp$ and $\HH=-\frac{1}{2}x^\perp$. Self-shrinkers form one of the basic models in the study of the mean curvature flow, since they arise as homothetically shrinking solutions and as tangent flows at singularities. The simplest examples of self-shrinkers are round spheres, planes and circular cylinders. A central rigidity question concerns which geometric assumptions force a self-shrinker of higher codimension to reduce to one of lower codimension.

In the case of codimension one, classification results under convexity hypotheses are well established.  For curves, the closed homothetic shrinkers were classified by Abresch--Langer \cite{AbreschLanger1986}.  In higher dimensions, Huisken \cite{Huisken1990} proved that compact mean-convex hypersurface self-shrinkers are round spheres. Furthermore, for complete self-shrinkers with polynomial volume growth and bounded second fundamental form, he \cite{Huisken1993} classified the generalized cylinders. Colding--Minicozzi \cite{ColdingMinicozzi2012} later removed the boundedness assumption on the second fundamental form for complete mean-convex hypersurface self-shrinkers. In addition, Brendle \cite{Brendle2016} proved that the round sphere is the only compact embedded self-shrinker of genus zero in $\R^3$. 

In higher codimension, the situation is more delicate. There is no canonical choice of orientation to define a signed scalar mean curvature, and the condition $|\HH|>0$ alone does not force codimension reduction: minimal submanifolds of self-shrinking spheres already give many genuine self-shrinkers of higher codimension. Consequently, additional structure is needed. Smoczyk \cite{Smoczyk2005} showed that the parallel principal normal condition leads to a precise classification in arbitrary codimension; this direction was subsequently refined by Li--Wei \cite{LiWei2014}. Andrews--Li--Wei \cite{AndrewsLiWei2014} extended Colding--Minicozzi's F-stability theory to arbitrary codimension, classifying F-stable self-shrinkers as cylinders with nonnegative mean curvature. Arezzo--Sun \cite{ArezzoSun2013} further showed that under the additional assumptions of flat normal bundle and $\HH\neq0$, the only such self-shrinker is the round sphere.

A second approach is to impose pinching conditions on the second fundamental form. In codimension one, Le--Sesum \cite{LeSesum2010} proved a gap theorem which, under the normalization $\HH=-x^\perp$, forces a complete embedded hypersurface self-shrinker, with polynomial volume growth and $|A|^2<1$, to be a hyperplane. Later, Cao--Li \cite{CaoLi2013} extended this gap phenomenon to arbitrary codimension, proving the optimal classification under $|A|^2\le1$; Cheng--Peng \cite{ChengPeng2012} subsequently removed the polynomial volume growth assumption by applying a generalized maximum principle to the drift operator. Rigidity results with $0\le|A|^2-1\le\delta(n)$ have been obtained by many authors \cite{ChengWei2015,DingXin2014,LeiXuXu2020,XuXu2017,Zhao2025}. Moreover, Ding--Xin--Yang \cite{DingXinYang2016} proved that complete self-shrinkers with Gauss image in a hemisphere are hyperplanes or cylinders in codimension one, and self-shrinking graphs with slope less than 3 are linear subspaces in higher codimension. Cao--Xu--Zhao \cite{CaoXuZhao2024} and Ding--Ge--Li \cite{DingGeLi2026} studied pinching theorems in higher codimension under the assumption of bounded trace-free second fundamental form. In 2022, Naff \cite{Naff2022} proved that closed flows in higher codimension satisfying $|A|^2\le c_n|\HH|^2$ become asymptotically codimension one near singularities. Subsequently, Lee--Naff--Zhu \cite{LeeNaffZhu2026} established a scale-invariant version of this planarity estimate, together with a convexity estimate for pinched ancient solutions. As a consequence, they classified uniformly $c_n$-pinching shrinkers of bounded entropy as generalized cylinders. More recently, Impera--Rimoldi--Ruatta \cite{ImperaRimoldiRuatta2026} used a purely elliptic argument based on weighted parabolicity in Gaussian space to classify properly immersed $\frac{4}{3n}$-pinching self-shrinkers of arbitrary codimension as planes or generalized cylinders.

In contrast to these pinching approaches, our aim is to investigate the rigidity of complete self-shrinkers in arbitrary codimension under the condition $\operatorname{Ric} \ge \frac{n-2}{n^2}|\HH|^2g$. In what follows, at points where $|\HH|>0$, we use a normal frame adapted to the mean curvature vector, with its first normal direction along $\HH$.  We use
$$
  |A^\perp|^2=\sum_{\alpha\ge n+2}|A^\alpha|^2,\qquad\rho^\perp=\sum_{\alpha,\beta=n+1}^{n+p}|[A^\alpha,A^\beta]|^2 .
$$
Thus $A^\perp$ is the component of the second fundamental form orthogonal to the mean curvature direction, and $\rho^\perp$ is the unnormalized squared normal curvature. We first establish the following rigidity theorem under an almost sharp lower bound on the Ricci curvature.

\begin{theorem} \label{thm:ricci-improved}
Let $x:M^n\to\R^{n+p}$ be a complete connected immersed self-shrinker with $n\ge2$. Suppose that
$$
  \operatorname{Ric}\ge\mu_n|\HH|^2g
$$
holds on $M$.  Then, after an orthogonal transformation of $\R^{n+p}$, the image $x(M)$ is either the linear subspace $\R^n$ through the origin or the round self-shrinking sphere $\mathbb S^n\left(\sqrt{\frac{n}{\lambda}}\right)$ contained in an $(n+1)$-dimensional linear subspace.
\end{theorem}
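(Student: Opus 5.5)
We will use the Gauss equation to turn the Ricci lower bound into an algebraic pinching of the second fundamental form. We then feed this pinching into a Simons-type identity for the drift Laplacian $\mathcal L=\Delta-\lambda\langle x,\nabla\cdot\rangle$, and close the argument with the generalized maximum principle of Cheng--Peng. The Cheng--Peng principle needs no volume growth assumption, but it does need $\operatorname{Ric}$ bounded below. Here $\operatorname{Ric}\ge \mu_n|\HH|^2g\ge0$, so this holds.

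\textbf{Step 1: pointwise pinching.} At points with $|\HH|>0$ we choose $e_{n+1}=\HH/|\HH|$. The Gauss equation gives
$$\operatorname{Ric}(e_i,e_i)=\sum_\alpha\Big(\operatorname{tr}A^\alpha\, h^\alpha_{ii}-\sum_j (h^\alpha_{ij})^2\Big).$$
We diagonalize $A^{n+1}$ and apply the standard Li--Li/Chen-type optimization for each index $i$. This should turn $\operatorname{Ric}\ge\mu_n|\HH|^2$ into two bounds:
$$|\mathring A^{n+1}|^2\le c_1(n,\mu_n)|\HH|^2,\qquad |A^\perp|^2\le c_2(n,\mu_n)|\HH|^2 .$$
We will also need an estimate on $\rho^\perp$ from the DDVV-type inequality, $\rho^\perp\le$ (const)$\,|\mathring A|^4$ with refined constants that separate the $A^{n+1}$ and $A^\perp$ parts, in the style of Lu/Li--Li. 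The threshold $\frac{n-2}{n^2}$ is exactly where $\operatorname{Ric}\ge0$-type pinching forces $|A|^2\le\frac{1}{n-1}|\HH|^2$. The extra $\varepsilon_n$ should give the slack needed in the normal curvature term.

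\textbf{Step 2: Simons identity and a test function.} For self-shrinkers we have
$$\tfrac12\mathcal L|A|^2=|\nabla A|^2+\lambda|A|^2-\sum_{\alpha,\beta}\big(\langle A^\alpha,A^\beta\rangle^2+|[A^\alpha,A^\beta]|^2\big),$$
and similarly
$$\tfrac12\mathcal L|\HH|^2=|\nabla^\perp\HH|^2+\lambda|\HH|^2-\sum_{i,j}\langle A_{ij},\HH\rangle^2 .$$
We consider $f=|A|^2-\kappa|\HH|^2$, or better the quotient $|A^\perp|^2/|\HH|^2$ together with $|\mathring A|^2/|\HH|^2$, where $\kappa$ is chosen from Step 1. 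The Kato-type inequality $|\nabla A|^2\ge\frac{3}{n+2}|\nabla\HH|^2$ controls the gradient terms. The reaction terms are controlled by the Step 1 pinching plus the DDVV estimate, and the constant $\varepsilon_n\approx\frac1{3n^2}$ is tuned so that the zero-order terms have a sign. The aim is a differential inequality
$$\mathcal L\Big(\frac{|\mathring A|^2}{|\HH|^2}\Big)\ge 2\Big\langle\nabla\log|\HH|^2,\nabla\frac{|\mathring A|^2}{|\HH|^2}\Big\rangle+c\,\frac{|\mathring A|^2}{|\HH|^2}\,|A^\perp|^2\ge0$$
with a bounded quantity. Pinching gives the bound $|\mathring A|^2/|\HH|^2\le C$ automatically.

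\textbf{Step 3: conclusion and main obstacle.} First we handle the zero set of $\HH$. If $\HH\equiv0$ somewhere, the pinching forces $A=0$ there. Since $\operatorname{Ric}\ge0$ and $x^\perp=0$, analyticity and connectedness give that $M$ is totally geodesic. A self-shrinker that is totally geodesic is a linear subspace through the origin. Otherwise $|\HH|>0$ everywhere, and the Cheng--Peng maximum principle applied to the bounded subsolution shows the supremum is attained asymptotically with vanishing reaction term. This yields $A^\perp\equiv0$ and $\nabla^\perp e_{n+1}=0$, so $M$ lies in an $(n+1)$-dimensional subspace (Erbacher-type reduction). From there it is a hypersurface self-shrinker with $H>0$ and $\operatorname{Ric}\ge0$. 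Huisken/Colding--Minicozzi, together with the pinching $|A|^2\le\frac1{n-1}H^2$ that excludes cylinders, give the sphere $\mathbb S^n(\sqrt{n/\lambda})$.

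The main obstacle is Step 2: getting a definite sign of the reaction terms, in particular bounding the cross terms $\sum\langle A^{n+1},A^\alpha\rangle^2$ and the commutators $|[A^{n+1},A^\alpha]|^2$ by $|\HH|^2|A^\perp|^2$ with a constant small enough. I would try first a sharp commutator estimate $|[A^{n+1},A^\alpha]|^2\le 2|\mathring A^{n+1}|^2|A^\alpha|^2$ combined with the Step 1 bound on $|\mathring A^{n+1}|^2$. This is precisely where $\varepsilon_n$ enters.
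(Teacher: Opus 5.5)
There is a genuine gap where you close the argument. Your Step 2 inequality is for the modified operator $\mathcal L_\lambda+2\langle\nabla\log|\HH|^2,\nabla\cdot\rangle$, and the Cheng--Peng Omori--Yau principle for $\mathcal L_\lambda$ cannot absorb the extra drift. Write $Q=|\mathring A|^2/|\HH|^2$ and take a sequence $p_k$ with $Q(p_k)\to\sup Q$ and $|\nabla Q|(p_k)\to0$. The term $\langle\nabla\log|\HH|^2,\nabla Q\rangle(p_k)$ need not tend to zero, because the only available control is $|\nabla\log|\HH|^2|\le2\lambda\sqrt{1-n\mu}\,|x^T|$ (from $\nabla^\perp_X\HH=\lambda A(X,x^T)$ and the traced pinching), which is unbounded. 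Even a valid sequence-wise principle would only make the reaction term small along that sequence. That term is not a function of $Q$, so this cannot give $A^\perp\equiv0$ on all of $M$.

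What you are missing is a mechanism that forces $Q$ to be constant. The paper gets one in three steps:
\begin{enumerate}
\item It shows that the Ricci lower bound alone makes $x$ proper. The second variation along a minimizing geodesic bounds $\int_\gamma\operatorname{Ric}(\dot\gamma,\dot\gamma)$, hence $\int_\gamma\langle\HH,A(\dot\gamma,\dot\gamma)\rangle\le C/\mu$. The identity $\frac{d^2}{ds^2}\frac{|x\circ\gamma|^2}{2}=1-\frac1\lambda\langle\HH,A(\dot\gamma,\dot\gamma)\rangle$ then gives $d_M(o,q)\le|x(q)|+C$.
\item It deduces polynomial volume growth from Cheng--Zhou.
\item It runs a cutoff argument in $L^2(|\HH|^4d\mu_f)$. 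This is the measure for which the modified operator is symmetric, and it has finite mass by $|\HH|\le\lambda|x|$ and polynomial volume growth.
\end{enumerate}
Your zero-set step has a related defect. Knowing $A=0$ at a single zero of $\HH$, plus analyticity, does not yield total geodesy. The paper instead uses the Gronwall estimate $|\nabla^\perp\HH|\le C|\HH|$ along curves to show that $\HH$ vanishes identically once it vanishes at one point.

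The endgame also fails at the stated constant. $\mu_n$ is defined exactly so that the coefficient of $|A^\perp|^2|\HH|^2$ in the zeroth-order estimate vanishes. So at $\mu=\mu_n$ there is no positive reaction term $c\,Q|A^\perp|^2$; the zeroth-order contribution is only nonnegative. Rigidity comes instead from the gradient term. Once $Q$ is constant, Andrews--Baker and the strict bound $Q<\frac{2(n-1)}{n(n+2)}$ give $\nabla^\perp\HH\equiv0$, not $A^\perp\equiv0$. The paper then argues as follows. From $A(\cdot,x^T)=0$ it gets $\operatorname{Ric}(x^T,x^T)=0$, hence $x^T\equiv0$. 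So $M$ is a compact minimal submanifold of $\mathbb S^{n+p-1}(\sqrt{n/\lambda})$ with $\operatorname{Ric}>(n-2)g$ after rescaling. A rigidity theorem for such minimal submanifolds of spheres (Xu--Gu, via the orientable double cover if necessary) then gives the totally geodesic sphere. The case $n=2$ is handled separately, and only there does $A^\perp\equiv0$ follow directly.

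Your route through Erbacher reduction and Huisken/Colding--Minicozzi presupposes $A^\perp\equiv0$. Colding--Minicozzi would in any case need the volume growth you chose not to prove.

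Your proposed cross-term bound also does not reach $\mu_n$. B\"ottcher--Wenzel, even combined with the eigenvalue box $\frac{1-\sqrt{1-4\mu}}2\le A^{n+1}_{ii}/|\HH|\le\frac{1+\sqrt{1-4\mu}}2$, puts the coefficient $\frac32n(n-1)$ in front of $\bigl(\frac1n-\frac{1-\sqrt{1-4\mu}}2\bigr)^2$. The paper bounds the commutator by the spectral diameter of $\mathring A^{n+1}$ and optimizes jointly with $|\mathring A^{n+1}|^2$, which gives $\frac{15n}{2(n+3)}$ instead; that is what produces $\mu_n$.

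Finally, a smaller correction: at $\mu=\frac{n-2}{n^2}$ the traced bound is $|A|^2\le\frac2n|\HH|^2$, not $\frac1{n-1}|\HH|^2$. Cylinders are excluded simply because $\operatorname{Ric}>0$.
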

Here $\mu_n$ is an explicit positive constant (see \eqref{eq:mu-epsilon-exact} in Section \ref{sec:ricci-pinching}), and it can be written as
$$
  \mu_n=\begin{cases}
    \frac{1}{6}, & n=2,\\
    \frac{n-2}{n^2}+\varepsilon_n, & n\ge3,
  \end{cases}
$$
where
$$
  \varepsilon_n=\frac{1}{3n^2}+O\left(\frac{1}{n^5}\right)\quad\text{as }n\to\infty.
$$

Moreover, under the sharp lower bound $\operatorname{Ric}\ge\frac{n-2}{n^2}|\HH|^2g$, we establish a rigidity theorem.  No additional hypothesis is required in codimension one or two, whereas in higher codimension we impose the normal curvature condition stated below.

\begin{theorem} \label{thm:critical-main}
Let $x:M^n\to\R^{n+p}$ be a complete connected immersed self-shrinker with $n\ge4$. Assume that
$$
  \operatorname{Ric} \ge \frac{n-2}{n^2}|\HH|^2g
$$
holds on $M$. If $p=1,\,2$, or $p\ge3$ and 
$$
  \rho^\perp \le \left( R-\frac{n-2}{n}|\HH|^2 \right)|A^\perp|^2,
$$
then, after an orthogonal transformation of $\R^{n+p}$, the image $x(M)$ is one of the following:
\begin{enumerate}
\item[\textup{(i)}] The linear subspace $\R^n$ through the origin;
\item[\textup{(ii)}] The round self-shrinking sphere $\mathbb S^n\left(\sqrt{\frac{n}{\lambda}}\right)$ contained in an $(n+1)$-dimensional linear subspace;
\item[\textup{(iii)}] The Euclidean dilation of the Clifford hypersurface $\mathbb S^{\frac{n}{2}}
  \left(\sqrt{\frac{n}{2\lambda}}\right)\times\mathbb S^{\frac{n}{2}}\left(\sqrt{\frac{n}{2\lambda}}\right)$ with $p=2$ and $n$ even.
\end{enumerate}
\end{theorem}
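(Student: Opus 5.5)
We prove Theorem~\ref{thm:critical-main} by the same scheme that underlies Theorem~\ref{thm:ricci-improved}. First we turn the Ricci lower bound into a pointwise pinching of the second fundamental form. Then we feed that pinching into a Simons-type identity for the drift Laplacian $\mathcal L=\Delta-\lambda\langle x,\nabla\cdot\rangle$. Finally we use a maximum-principle or integral argument to force equality, and classify the equality cases.

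\emph{Algebraic step.} At points where $\HH\neq0$ we use the adapted frame with $e_{n+1}=\HH/|\HH|$. By the Gauss equation,
\[
\operatorname{Ric}(e_i,e_i)=\sum_\alpha \operatorname{tr}A^\alpha\, A^\alpha_{ii}-\sum_{\alpha,j}(A^\alpha_{ij})^2 .
\]
We diagonalize $A^{n+1}$ with eigenvalues $\lambda_i$, so that $\sum_i\lambda_i=|\HH|$. The hypothesis $\operatorname{Ric}\ge\frac{n-2}{n^2}|\HH|^2$ then gives, for each $i$,
\[
|\HH|\lambda_i-\lambda_i^2-\sum_{\alpha\ge n+2,\,j}(A^\alpha_{ij})^2\ge \frac{n-2}{n^2}|\HH|^2 .
\]
Summing over $i$ gives $|A|^2\le \frac{2(n-1)}{n}\cdot\frac{|\HH|^2}{n}+\dots$. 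More precisely, we obtain the sharp relation
\[
R\ge \frac{n-2}{n}|\HH|^2,\qquad |A|^2\le \frac{n+2}{n}\cdot\frac{|\HH|^2}{n}\,\cdot\frac{n}{2}\cdot\frac{2}{n},
\]
and the individual-eigenvalue bound localises each $\lambda_i$ in $\bigl[\tfrac{(n-2)}{n}\cdot\tfrac{|\HH|}{n}\cdot\ldots\bigr]$. The important outputs are:
\begin{enumerate}
\item[(a)] a bound $|\mathring A^{n+1}|^2\le \frac{|\HH|^2}{n}\cdot c$, with equality exactly for the two-eigenvalue configuration $\lambda_i\in\{0,\tfrac{2}{n}|\HH|\}$ with multiplicities $n/2$, which is the Clifford case;
\item[(b)] a bound $|A^\perp|^2\le R-\frac{n-2}{n}|\HH|^2$ up to the $A^{n+1}$ part.
\end{enumerate}

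\emph{Analytic step.} For a self-shrinker one has the standard identities
\[
\tfrac12\mathcal L|\HH|^2=|\nabla^\perp\HH|^2+\lambda|\HH|^2-\sum_{i,j}\langle A_{ij},\HH\rangle^2
\]
and
\[
\tfrac12\mathcal L|A|^2=|\nabla A|^2+\lambda|A|^2-\sum_{\alpha,\beta}\bigl(\langle A^\alpha,A^\beta\rangle^2+|[A^\alpha,A^\beta]|^2\bigr).
\]
We consider the scale-invariant quantity $f=|A|^2/|\HH|^2$, or equivalently $|A|^2-\frac{c}{n}|\HH|^2$, on the set where $\HH\neq0$. In its evolution the reaction terms split into three parts: an $A^{n+1}$ part, a mixed part $\sum_{\alpha\ge n+2}\langle A^{n+1},A^\alpha\rangle^2$, and the normal part $\sum_{\alpha,\beta\ge n+2}\langle A^\alpha,A^\beta\rangle^2+\rho^\perp$. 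The step I expect to be the main obstacle is bounding this normal reaction term by $\bigl(R-\frac{n-2}{n}|\HH|^2\bigr)|A^\perp|^2$.
\begin{itemize}
\item For $p=2$ there is a single normal direction orthogonal to $\HH$. The commutator $[A^{n+1},A^{n+2}]$ can then be estimated directly using the eigenvalue localisation in (a), which gives the bound by an elementary inequality.
\item For $p=1$ the term is absent.
\item For $p\ge3$ this bound is exactly the extra hypothesis on $\rho^\perp$.
\end{itemize}
With these bounds, the gradient terms are handled by a Kato-type inequality $|\nabla A|^2\ge\frac{3}{n+2}|\nabla\HH|^2$, and we obtain $\mathcal L f\ge \langle\text{drift},\nabla f\rangle$ together with a nonnegative reaction term.

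\emph{Globalisation and classification.} First we exclude points with $\HH=0$. If $\HH\equiv0$ somewhere on an open set, then the Ricci bound forces $\operatorname{Ric}\ge0$ and $|A|^2\le\ldots$ there, so the set is totally geodesic, and by analyticity $M$ is a plane through the origin. Otherwise the bound $\operatorname{Ric}\ge\frac{n-2}{n^2}|\HH|^2g$ together with the self-shrinker equation gives a Myers-type argument, or the Cheng--Peng generalized maximum principle for $\mathcal L$ (Omori--Yau for the drift Laplacian, valid because $\operatorname{Ric}_f\ge\lambda g$). From this we conclude that $f$ attains its supremum and is constant, so all the inequalities above are equalities. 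Equality gives:
\begin{enumerate}
\item[(1)] $\nabla^\perp\HH=0$ and $A^\perp=0$, so the normal bundle reduces and $M$ lies in an $(n+1)$-dimensional subspace;
\item[(2)] $A^{n+1}$ is either umbilic, which gives the sphere $\mathbb S^n(\sqrt{n/\lambda})$, or has the two-eigenvalue splitting with parallel distributions.
\end{enumerate}
In the second alternative, de Rham plus the shrinker equation yield a product of spheres. In codimension one a product $\mathbb S^k\times\R^{n-k}$ violates the sharp Ricci bound, so only (i) and (ii) survive there. When $p=2$ the Clifford configuration $\mathbb S^{n/2}\times\mathbb S^{n/2}$ sits in $\R^{n+2}$ and attains equality, which gives (iii).
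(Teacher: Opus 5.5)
Your skeleton matches the paper: the quotient $|\mathring A|^2/|\HH|^2$, the codimension-by-codimension treatment of the reaction term, and the Kato/Andrews--Baker gradient bound. The proposal nevertheless has genuine gaps, and the first is the globalisation step.

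At the critical constant, $Q=|\mathring A|^2/|\HH|^2$ only satisfies $\bigl(\mathcal L_\lambda+2\inner{\nabla\log|\HH|^2}{\nabla\cdot}\bigr)Q\ge0$. The reaction term is merely nonnegative, and the extra drift has no a priori bound.

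\begin{itemize}
\item An Omori--Yau (Cheng--Peng) sequence for $\mathcal L_\lambda$ does not control $\inner{\nabla\log|\HH|^2}{\nabla Q}$. Even if it did, it would only show that the reaction term tends to zero along a maximising sequence, not that $Q$ attains its supremum or is constant on a noncompact $M$.
\item Your justification is also false: $\operatorname{Ric}+\nabla^2\frac{\lambda|x|^2}{2}=\lambda g-\sum_i\inner{A(\cdot,e_i)}{A(\cdot,e_i)}\le\lambda g$, so $\operatorname{Ric}_f\ge\lambda g$ fails.
\item A Myers argument is unavailable, since $|\HH|$ has no a priori positive lower bound.
\end{itemize}

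Two ingredients are missing. The first is properness of $x$. The paper gets it from the Ricci bound by a second-variation argument along minimising geodesics. That argument combines $\frac{d^2}{ds^2}\frac{|x|^2}{2}=1-\frac1\lambda\inner{\HH}{A(\dot\gamma,\dot\gamma)}$ with $\inner{\HH}{A(v,v)}\le\frac{n^2}{n-2}\operatorname{Ric}(v,v)$, and Cheng--Zhou then gives polynomial volume growth. The second is that the modified operator is symmetric for $|\HH|^4d\mu_f$. A cutoff argument using $\int_M|\HH|^4d\mu_f<\infty$ then makes bounded subsolutions constant.

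You also only exclude $\HH\equiv0$ on an open set, but $Q$ is undefined at isolated zeros. You need that a single zero propagates. One way is $|\nabla^\perp_X\HH|=\lambda|A(X,x^T)|\le C|x^T||X||\HH|$ plus Gr\"onwall along curves. This uses the correct trace bound $|A|^2\le\frac2n|\HH|^2$; your displayed one is wrong.

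The second gap is the equality analysis, which is incorrect and contradicts conclusion (iii).

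\begin{itemize}
\item In the $\HH$-adapted frame the Clifford shrinker has $\mathring A^{n+1}=0$ and $A^{n+2}$ with eigenvalues $\pm|\HH|/n$. Your item (1), $A^\perp=0$, would therefore exclude it.
\item An eigenvalue $0$ of $A^{n+1}$, as in (a) and (2), makes the left side of your own eigenvalue inequality nonpositive, so that configuration cannot occur.
\item Equality in the zeroth-order estimate does not force $A^\perp=0$. Take $X=\frac{n}{|\HH|}\mathring A^{n+1}=0$ and $Y=\frac{n}{|\HH|}A^{n+2}$ with $|Y|^2=n$; then $2\inner{X}{Y}^2+2|[X,Y]|^2+|Y|^4-(n+|X|^2)|Y|^2=0$.
\item For $n=4$ the Clifford shrinker has $Q=\frac14=\frac{2(n-1)}{n(n+2)}$. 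The gradient coefficient vanishes, so $\nabla^\perp\HH=0$ cannot be read off.
\end{itemize}

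The paper instead combines the equality case of the Andrews--Baker decomposition with constancy of $Q$. This gives $\operatorname{Ric}(x^T,\cdot)=0$, hence $x^T=0$. So $M$ is a compact minimal submanifold of $\mathbb S^{n+p-1}(\sqrt{n/\lambda})$ with $\operatorname{Ric}\ge(n-2)g$ after rescaling. It is simply connected by Xu--Gu and classified by Ejiri.

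Ejiri's list includes the Veronese $\mathbb{CP}^2\subset\mathbb S^7$. This is exactly where the $\rho^\perp$ hypothesis for $p\ge3$ enters: there $R-\frac{n-2}{n}|\HH|^2=0$ but $\rho^\perp>0$. Your de Rham route never confronts this case, and it never shows the eigen-distributions are parallel.

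Finally, for $p=2$, eigenvalue localisation of $A^{n+1}$ is not enough. For $X=\operatorname{diag}(a,-a,0,\dots,0)$ and $Y$ supported on the $(1,2)$ entry, the required inequality reads $6a^2+|Y|^2\le n$. The trace bound $|Y|^2\le n-2a^2$ does not yield this. You must use the coupling of $X$ and $Y$ in $\operatorname{Id}+(n-2)X-X^2-Y^2\ge0$. Proposition 3.3 does this via $P\ge0$, $\operatorname{tr}P$ and $[P,Y]$, with separate arguments for $n=4$ and $n\ge5$.
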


\begin{remark}\label{rem:four-dimensional-critical}
Let $\widetilde\Sigma^n\subset\mathbb S^{n+1}(1)$ be a minimal hypersurface.  Then the dilation $\Sigma=\sqrt{\frac{n}{\lambda}}\,\widetilde\Sigma\subset\R^{n+2}$ satisfies $\HH_\Sigma=-\lambda x^\perp$.  When $n$ is even, the nonspherical equality model
\[
  \mathbb S^{\frac{n}{2}}\left(\sqrt{\frac{n}{2\lambda}}\right)\times\mathbb S^{\frac{n}{2}}\left(\sqrt{\frac{n}{2\lambda}}\right)
\]
is obtained from the minimal Clifford hypersurface
\[
  T\left(\frac{\pi}{4}\right)= \mathbb S^{\frac{n}{2}}\left(\frac{1}{\sqrt2}\right)\times\mathbb S^{\frac{n}{2}}\left(\frac{1}{\sqrt2}\right).
\]
Liu--Terng~\cite[Theorem~1.1 and Example~4.4]{LiuTerng2020} showed that the spherical mean curvature flow within the corresponding Clifford family is ancient and converges to $T\left(\frac{\pi}{4}\right)$ as $t\to-\infty$. Among the members of this family, only the minimal Clifford hypersurface generates a homothetically shrinking Euclidean mean curvature flow. Moreover, its Euclidean dilation satisfies
\[
  \operatorname{Ric}=\frac{n-2}{n^2}|\HH|^2g,
\]
and hence attains equality in the Ricci lower bound.

The Ricci bound alone is nevertheless satisfied by the standard Veronese minimal embedding $\mathbb{CP}^2(4/3)\subset\mathbb S^7(1)$. After dilation by $\sqrt{\frac{4}{\lambda}}$, it yields a $4$-dimensional self-shrinker in $\R^8$ satisfying
$$
\begin{gathered}
  |\HH|^2=4\lambda,\qquad
  \operatorname{Ric}=\frac{\lambda}{2}g=\frac{1}{8}|\HH|^2g,
  \qquad R=\frac{1}{2}|\HH|^2,\\
  \rho^\perp=\frac{1}{24}|\HH|^4=\frac{1}{6}R^2
  =\frac{2}{3}|A^\perp|^4>0.
\end{gathered}
$$
Thus this model realizes equality in the Ricci bound but is excluded by the additional normal curvature condition.

Note that Li--Wei \cite{LiWei2014} classified closed Ricci pinched self-shrinkers in higher codimension under additional spherical reduction hypotheses. Their argument does not extend to the complete case: the integration by parts step requires closedness (or at least polynomial volume growth), and the spherical minimal submanifold classifications by Ejiri and Li are inherently compactness results. We assume neither closedness, polynomial volume growth, parallel principal normal, nor a priori constancy of the extrinsic radius. Instead, the Ricci lower bound first yields properness, then bounds the eigenvalues of $\mathring A^{n+1}/|\HH|$ and estimates the zeroth-order terms in the quotient identity. The quotient identity developed below ensures the required spherical reduction; at the critical coefficient, this holds without auxiliary conditions in Euclidean codimension one or two, and under the stated normal curvature condition in higher codimension. Tracing the Ricci lower bound yields the $\frac{2}{n}$-pinching condition, which is weaker than that of Impera--Rimoldi--Ruatta \cite{ImperaRimoldiRuatta2026}. 

\end{remark}

When $\operatorname{Ric}\ge\frac{n-2}{n^2}|\HH|^2g$, the flat normal bundle implies the additional normal curvature bound. This yields the following corollary.

\begin{corollary}
    Let $x:M^n\to\R^{n+p}$ be a complete connected immersed self-shrinker with flat normal bundle.  If $n\ge4$ and
$$
  \operatorname{Ric}\ge\frac{n-2}{n^2}|\HH|^2g
$$
holds on $M$, then, after an orthogonal transformation of $\R^{n+p}$, the image $x(M)$ is either $\R^n$, $\mathbb S^n\left(\sqrt{\frac{n}{\lambda}}\right)$ or $\mathbb S^{\frac{n}{2}}\left(\sqrt{\frac{n}{2\lambda}}\right)\times\mathbb S^{\frac{n}{2}}\left(\sqrt{\frac{n}{2\lambda}}\right)$.
\end{corollary}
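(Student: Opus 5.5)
The corollary will follow from Theorem~\ref{thm:critical-main} once we check its hypotheses. We assume $n\ge4$ and the Ricci bound. If $p\le2$ the theorem applies directly and there is nothing to prove, so assume $p\ge3$. In that case we only need the normal curvature inequality
$$
  \rho^\perp \le \Big( R-\frac{n-2}{n}|\HH|^2 \Big)|A^\perp|^2 .
$$

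\textbf{Left-hand side.} Flat normal bundle means $R^\perp\equiv0$. By the Ricci equation, $R^\perp_{\alpha\beta ij}=\inner{[A^\alpha,A^\beta]e_i}{e_j}$. Hence every commutator $[A^\alpha,A^\beta]$ vanishes for every choice of orthonormal normal frame, and $\rho^\perp\equiv0$ on $M$. This holds both at points with $|\HH|>0$, where the frame adapted to $\HH$ is used, and at zeros of $\HH$. Note that $\rho^\perp$ is frame-independent as a sum over all pairs.

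\textbf{Right-hand side.} Since $|A^\perp|^2\ge0$, it suffices to show $R\ge\frac{n-2}{n}|\HH|^2$. This comes from tracing the Ricci bound: summing $\operatorname{Ric}(e_i,e_i)\ge\frac{n-2}{n^2}|\HH|^2$ over an orthonormal frame gives
$$
R\ge n\cdot\frac{n-2}{n^2}|\HH|^2=\frac{n-2}{n}|\HH|^2 .
$$
So the right-hand side is nonnegative and the condition $0\le(R-\frac{n-2}{n}|\HH|^2)|A^\perp|^2$ holds everywhere.

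\textbf{Conclusion.} With the hypotheses verified, Theorem~\ref{thm:critical-main} gives that, up to an orthogonal transformation, $x(M)$ is $\R^n$, the sphere $\mathbb S^n(\sqrt{n/\lambda})$, or the product $\mathbb S^{n/2}(\sqrt{n/(2\lambda)})\times\mathbb S^{n/2}(\sqrt{n/(2\lambda)})$ with $n$ even. The only point needing care is that the stated normal curvature condition is meant pointwise on all of $M$, including where $\HH=0$ and the splitting $A^\perp$ is not canonically defined. There, however, $\rho^\perp=0$ while the right-hand side is nonnegative for any choice of frame, so the inequality holds trivially. I do not expect a genuine obstacle: the corollary is a direct check of hypotheses.
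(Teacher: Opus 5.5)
Your proposal is correct and follows the paper's own (one-line) justification. Flatness of the normal bundle forces $[A^\alpha,A^\beta]=0$ via the Ricci equation, so $\rho^\perp\equiv0$, while tracing the Ricci bound gives $R-\frac{n-2}{n}|\HH|^2=\frac{1}{n}|\HH|^2-|\mathring A|^2\ge0$; hence the normal curvature hypothesis of Theorem~\ref{thm:critical-main} holds when $p\ge3$. Your extra care at zeros of $\HH$ is harmless but unnecessary, because the proof of that theorem uses the condition only after Lemma~\ref{lem:mean curvature-dichotomy} has reduced to the case $|\HH|>0$ everywhere.
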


The main analytic tool is a quotient identity for $|\mathring A|^2/|\HH|^2$, where $\mathring A$ denotes the trace-free second fundamental form. Section~2 develops the analytic framework by combining the self-shrinker Simons identities for $|A|^2$ and $|\HH|^2$ to obtain a drift Laplacian formula for this quotient. The resulting operator is symmetric with respect to the weighted measure $|\HH|^4d\mu_f$, and a weighted Liouville argument converts the nonnegativity of $\left( \mathcal L_\lambda+2\inner{\nabla\log |\HH|^2}{\nabla\cdot} \right)\frac{|\mathring A|^2}{|\HH|^2}$ into rigidity. The positive Ricci lower bounds in the main theorems first imply that the immersion is proper and hence has polynomial volume growth, so no volume growth assumption is required. We then prove that the Ricci lower bound forces either the immersion to be a linear subspace or $|\HH|>0$ everywhere. When $|\HH|>0$ and $\nabla^\perp\HH\equiv0$, a spherical reduction argument shows that the self-shrinker is a compact minimal submanifold of the shrinker sphere. For the almost sharp Ricci lower bound, Section~3 derives the required zeroth-order estimate for $n\ge3$ by jointly optimizing the norm and spectral diameter of $\mathring A^{n+1}$. It then determines the corresponding constant $\mu_n$. The same section establishes the algebraic estimate needed at the sharp Ricci lower bound in codimension two. Section~4 combines these auxiliary results to prove the main theorems.

\section{Preliminaries and auxiliary results}
Let $x:M^n\to\mathbb R^{n+p}$ be an $n$-dimensional immersion.  We use the following convention for indices:
  $$
    1\le i,j,k,\ldots\le n,\qquad n+1\le \alpha,\beta,\gamma,\ldots\le n+p,\qquad 1\le A,B,C,\ldots\le n+p .
  $$
Let $\{e_A\}$ be a local orthonormal frame of $\mathbb R^{n+p}$ along $M$, chosen so that $e_i$ are tangent to $M$ and $e_\alpha$ are normal to $M$.  Let $\{\theta_A\}$ and $\{\theta_{AB}\}$ denote the associated dual coframe and connection forms.  The second fundamental form is written as
  $$
  A=\sum_{\alpha=n+1}^{n+p}A^\alpha e_\alpha,\qquad A^\alpha=\sum_{i,j=1}^n A_{ij}^\alpha\,\theta_i\otimes\theta_j,
  $$
where $A_{ij}^\alpha=A_{ji}^\alpha$.  The mean curvature vector $\HH$ is the trace of the second fundamental form defined by
  $$
  \HH=\operatorname{tr}A=\sum_{\alpha=n+1}^{n+p}H^\alpha e_\alpha,\qquad H^\alpha=\sum_{i=1}^n A_{ii}^\alpha .
  $$
Hence
  $$
  |A|^2=\sum_{\alpha=n+1}^{n+p}|A^\alpha|^2=\sum_{\alpha=n+1}^{n+p}\sum_{i,j=1}^n(A_{ij}^\alpha)^2,\qquad  |\HH|^2=\sum_{\alpha=n+1}^{n+p}(H^\alpha)^2=\sum_{\alpha=n+1}^{n+p} \left(\sum_{i=1}^n A_{ii}^\alpha\right)^2 .
  $$
The trace-free second fundamental form is defined by
  $$
    \mathring A^\alpha
    =
    A^\alpha-\frac{H^\alpha}{n}\operatorname{Id},
    \qquad
    |\mathring A|^2
    =
    |A|^2-\frac{1}{n}|\HH|^2,
  $$
where $\operatorname{Id}$ denotes the $n\times n$ identity matrix with respect to the chosen orthonormal tangent frame. Throughout the paper, we denote the full ordered commutator term by
  $$
  \rho^\perp := \sum_{\alpha,\beta=n+1}^{n+p}|[A^\alpha,A^\beta]|^2 .
  $$
The Gauss equations are
$$
\begin{aligned}
R_{i j k l}=&\sum_\alpha\left(A_{i k}^\alpha A_{j l}^\alpha-A_{i l}^\alpha A_{j k}^\alpha\right), \\
R_{i k}=&\sum_\alpha H^\alpha A_{i k}^\alpha-\sum_{\alpha, j} A_{i j}^\alpha A_{j k}^\alpha, \\
R=&|\HH|^2-|A|^2,
\end{aligned}
$$
where $R=\sum_{i,j}R_{ijij}$ is the scalar curvature of $M$.

The Codazzi equations are given by
$$
A_{i j k}^\alpha=A_{i k j}^\alpha,
$$
where the covariant derivative of $A_{i j}^\alpha$ is defined by
$$
\sum_k A_{i j k}^\alpha \theta_k=d A_{i j}^\alpha
+\sum_k A_{k j}^\alpha \theta_{k i}+\sum_k A_{i k}^\alpha \theta_{k j}
+\sum_\beta A_{i j}^\beta \theta_{\beta \alpha}.
$$
Assume now that $M$ is a self-shrinker satisfying
  $$
    \HH=-\lambda x^\perp,\qquad \lambda>0 .
  $$
Here $x^\perp$ and $x^T$ denote the normal and tangential projections of the position vector, respectively.  We use the weighted Laplacian
  $$
    \mathcal L_\lambda=\Delta-\lambda\inner{x^T}{\nabla\cdot},
    \qquad d\mu_f=e^{-\frac{\lambda|x|^2}{2}}d\mu .
  $$
For self-shrinkers, the following Simons identities hold \cite{CaoLi2013}
$$
  \frac{1}{2}\mathcal L_\lambda |A|^2=|\nabla A|^2+\lambda |A|^2
  -\sum_{\alpha,\beta}\left(\sum_{i,j}A_{ij}^{\alpha}A_{ij}^{\beta}\right)^2-\rho^\perp,
$$
and
$$
  \frac{1}{2}\mathcal L_\lambda |\HH|^2=|\nabla^\perp\HH|^2+\lambda|\HH|^2
  -\sum_{\alpha,\beta}\left(\sum_{i,j}A_{ij}^{\alpha}A_{ij}^{\beta}\right)H^{\alpha}H^{\beta}.
$$
Subtracting $\frac{1}{n}$ times the identity for $|\HH|^2$ from the identity for $|A|^2$ therefore yields
$$
  \frac{1}{2}\mathcal L_\lambda|\mathring A|^2
  =|\nabla\mathring A|^2+\lambda|\mathring A|^2
  -\sum_{\alpha,\beta}\left(\sum_{i,j}A_{ij}^{\alpha}A_{ij}^{\beta}\right)^2-\rho^\perp
  +\frac{1}{n}\sum_{\alpha,\beta}\left(\sum_{i,j}A_{ij}^{\alpha}A_{ij}^{\beta}\right)H^{\alpha}H^{\beta}.
$$
At every point where $|\HH|>0$, the preceding Simons identities give the quotient identity
\begin{equation}\label{eq:quotient}
\begin{aligned}
  &\frac{1}{2}\left(
  \mathcal L_\lambda+2\inner{\nabla\log |\HH|^2}{\nabla\cdot}
  \right)\frac{|\mathring A|^2}{|\HH|^2}\\
  =&\frac{1}{|\HH|^2}\Bigg[
  |\nabla\mathring A|^2
  -\frac{|\mathring A|^2}{|\HH|^2}|\nabla^\perp\HH|^2-\rho^\perp\\
  &-\sum_{\alpha,\beta}
  \left(\sum_{i,j}A_{ij}^{\alpha}A_{ij}^{\beta}\right)^2
  +\frac{|A|^2}{|\HH|^2}\sum_{\alpha,\beta}
  \left(\sum_{i,j}A_{ij}^{\alpha}A_{ij}^{\beta}\right)
  H^{\alpha}H^{\beta}\Bigg].
\end{aligned}
\end{equation}

Choose an orthonormal normal frame such that $e_{n+1}=\HH/|\HH|$.  Then
$$
\begin{aligned}
  H^{n+1}=|\HH|&,\qquad
  H^\alpha=0\quad(\alpha\ge n+2),\\
  A^{n+1}=\mathring A^{n+1}+\frac{|\HH|}{n}\operatorname{Id}&,\qquad
  \sum_i A_{ii}^{\alpha}=0\quad(\alpha\ge n+2).
\end{aligned}
$$
We write $|A^\perp|^2:=\sum_{\alpha\ge n+2}|A^\alpha|^2$ for the component of the second fundamental form orthogonal to the mean curvature direction.  Since $A^\alpha=\mathring A^\alpha$ for $\alpha\ge n+2$, it follows that
$$
  |A^\perp|^2=|\mathring A|^2-|\mathring A^{n+1}|^2.
$$

Using the preceding frame identities, we have
$$
  \frac{|A|^2}{|\HH|^2}
  \sum_{\alpha,\beta}\left(\sum_{i,j}A_{ij}^{\alpha}A_{ij}^{\beta}\right)H^\alpha H^\beta
  =\left(|\mathring A^{n+1}|^2+|A^\perp|^2+\frac{|\HH|^2}{n}\right)
  \left(|\mathring A^{n+1}|^2+\frac{|\HH|^2}{n}\right).
$$
Separating the mean curvature direction from the remaining normal directions yields
\begin{equation}\label{hd-eq:algebra}
\begin{aligned}
  &\sum_{\alpha,\beta}
  \left(\sum_{i,j}A_{ij}^{\alpha}A_{ij}^{\beta}\right)^2
  +
  \rho^\perp -
  \frac{|A|^2}{|\HH|^2}
  \sum_{\alpha,\beta}
  \left(\sum_{i,j}A_{ij}^{\alpha}A_{ij}^{\beta}\right)
  H^{\alpha}H^{\beta}  \\
  =&
  -
  \left(
    |\mathring A^{n+1}|^2+\frac{|\HH|^2}{n}
  \right)
  |A^\perp|^2
  +
  2\sum_{\alpha\ge n+2}
  \left(\sum_{i,j}\mathring A_{ij}^{n+1}A_{ij}^{\alpha}\right)^2
  +
  \sum_{\alpha,\beta\ge n+2}
  \left(\sum_{i,j}A_{ij}^{\alpha}A_{ij}^{\beta}\right)^2  \\
  &+
  2\sum_{\alpha\ge n+2}|[\mathring A^{n+1},A^{\alpha}]|^2
  +
  \sum_{\alpha,\beta\ge n+2}|[A^{\alpha},A^{\beta}]|^2 .
\end{aligned}
\end{equation}

\begin{lemma}\label{hd-lem:normal-scalar-algebra}
At a point where $|\HH|>0$, in the adapted normal frame introduced above, one has
$$
  \sum_{\alpha,\beta}\left(\sum_{i,j}A_{ij}^{\alpha}A_{ij}^{\beta}\right)^2+\rho^\perp
  -
  \frac{|A|^2}{|\HH|^2}
  \sum_{\alpha,\beta}\left(\sum_{i,j}A_{ij}^{\alpha}A_{ij}^{\beta}\right)H^{\alpha}H^{\beta}
  \le\left(|\mathring A|^2-\frac{|\HH|^2}{n}\right)|A^\perp|^2+\rho^\perp .
$$
\end{lemma}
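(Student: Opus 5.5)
The plan is to start from the identity \eqref{hd-eq:algebra}, which already rewrites the left-hand side of the lemma in the adapted frame $e_{n+1}=\HH/|\HH|$. Then we bound its right-hand side term by term, using only Cauchy--Schwarz and the fact that the commutator terms reassemble into $\rho^\perp$.

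First, the commutator terms. Since $A^{n+1}=\mathring A^{n+1}+\frac{|\HH|}{n}\operatorname{Id}$ and the identity commutes with everything, $[A^{n+1},A^\alpha]=[\mathring A^{n+1},A^\alpha]$. Splitting the double sum defining $\rho^\perp$ according to whether $\alpha$ or $\beta$ equals $n+1$ gives
$$
  \rho^\perp=2\sum_{\alpha\ge n+2}|[\mathring A^{n+1},A^\alpha]|^2+\sum_{\alpha,\beta\ge n+2}|[A^\alpha,A^\beta]|^2 ,
$$
because the diagonal terms vanish. So the last two terms of \eqref{hd-eq:algebra} are exactly $\rho^\perp$, which matches the $\rho^\perp$ on the right of the lemma.

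Next, the remaining terms. By Cauchy--Schwarz in the Hilbert--Schmidt inner product, $\big(\sum_{i,j}\mathring A^{n+1}_{ij}A^\alpha_{ij}\big)^2\le|\mathring A^{n+1}|^2|A^\alpha|^2$. Summing over $\alpha\ge n+2$ gives $2\sum_\alpha(\cdots)^2\le 2|\mathring A^{n+1}|^2|A^\perp|^2$. Similarly $\big(\sum_{i,j}A^\alpha_{ij}A^\beta_{ij}\big)^2\le|A^\alpha|^2|A^\beta|^2$, and summing over $\alpha,\beta\ge n+2$ gives at most $|A^\perp|^4$. Inserting these into \eqref{hd-eq:algebra}, the left-hand side of the lemma is at most
$$
  -\Big(|\mathring A^{n+1}|^2+\tfrac{|\HH|^2}{n}\Big)|A^\perp|^2+2|\mathring A^{n+1}|^2|A^\perp|^2+|A^\perp|^4+\rho^\perp
  =\Big(|\mathring A^{n+1}|^2+|A^\perp|^2-\tfrac{|\HH|^2}{n}\Big)|A^\perp|^2+\rho^\perp .
$$
Finally, use $|A^\perp|^2=|\mathring A|^2-|\mathring A^{n+1}|^2$, which was noted earlier. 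Then the bracket equals $|\mathring A|^2-\frac{|\HH|^2}{n}$, which is the claimed bound.

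There is no real obstacle here. The only point requiring care is the bookkeeping in the first step: one must check that \eqref{hd-eq:algebra} counts the mixed commutators with the factor $2$ and uses $\mathring A^{n+1}$ rather than $A^{n+1}$, so that they coincide exactly with $\rho^\perp$ and nothing is lost or double-counted.
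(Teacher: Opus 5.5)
Your proposal is correct and follows essentially the same route as the paper: you start from \eqref{hd-eq:algebra}, bound the two Gram-type sums by $2|\mathring A^{n+1}|^2|A^\perp|^2$ and $|A^\perp|^4$ via Cauchy--Schwarz, reassemble the commutator terms into $\rho^\perp$, and then use $|\mathring A|^2=|\mathring A^{n+1}|^2+|A^\perp|^2$. Your explicit check that $[A^{n+1},A^\alpha]=[\mathring A^{n+1},A^\alpha]$ and of the factor $2$ in the splitting of $\rho^\perp$ fills in a step the paper only asserts.
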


\begin{proof}
By the Cauchy--Schwarz inequality and the Gram matrix estimate,
$$
\begin{aligned}
  2\sum_{\alpha\ge n+2}
  \left(\sum_{i,j}\mathring A_{ij}^{n+1}A_{ij}^{\alpha}\right)^2
  &\le
  2|\mathring A^{n+1}|^2|A^\perp|^2,\\
  \sum_{\alpha,\beta\ge n+2}
  \left(\sum_{i,j}A_{ij}^{\alpha}A_{ij}^{\beta}\right)^2
  &\le
  |A^\perp|^4 .
\end{aligned}
$$
The commutator terms in \eqref{hd-eq:algebra} sum to $\rho^\perp$.  Consequently, we obtain
$$
\begin{aligned}
  &\sum_{\alpha,\beta}
  \left(\sum_{i,j}A_{ij}^{\alpha}A_{ij}^{\beta}\right)^2
  +
  \rho^\perp -
  \frac{|A|^2}{|\HH|^2}
  \sum_{\alpha,\beta}
  \left(\sum_{i,j}A_{ij}^{\alpha}A_{ij}^{\beta}\right)
  H^{\alpha}H^{\beta}  \\
  \le&
  -
  \left(
    \frac{|\HH|^2}{n}
    -|\mathring A^{n+1}|^2
    -|A^\perp|^2
  \right)
  |A^\perp|^2+\rho^\perp .
\end{aligned}
$$
Since $ |\mathring A|^2=|\mathring A^{n+1}|^2+|A^\perp|^2 $, the desired inequality follows.
\end{proof}

\begin{lemma}[Andrews--Baker \cite{AndrewsBaker2010}]\label{lem:gradient-estimate}
For a submanifold of Euclidean space, one has
$$
  |\nabla\mathring A|^2 \ge \frac{2(n-1)}{n(n+2)} |\nabla^\perp\HH|^2 .
$$
\end{lemma}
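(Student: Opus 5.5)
The idea is to work pointwise and reduce the inequality to the trace-free Codazzi tensor. Write $E_{ijk}=\nabla_k\mathring A_{ij}$, which is a normal-valued $3$-tensor. By the Codazzi equations, $\nabla_k A_{ij}$ is totally symmetric in $i,j,k$. So $E$ equals a totally symmetric tensor $T=\nabla A$ minus a pure-trace part built from $\nabla^\perp\HH$:
$$
  \nabla_k\mathring A_{ij}=\nabla_kA_{ij}-\frac1n\delta_{ij}\nabla_k^\perp\HH .
$$
Since each normal component contributes separately, it suffices to prove the scalar inequality for each $\alpha$ and then sum over $\alpha$. Fix $\alpha$ and set $T_{ijk}=\nabla_kA^\alpha_{ij}$, which is totally symmetric, and $v_k=\sum_iT_{iik}=\nabla_kH^\alpha$.

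The plan is to decompose $T$ orthogonally into its trace-free symmetric part $\mathring T$ and a pure-trace symmetric part. Define
$$
  S_{ijk}=\frac{1}{n+2}\left(\delta_{ij}v_k+\delta_{ik}v_j+\delta_{jk}v_i\right).
$$
Contracting $i=j$ gives $\frac{1}{n+2}(n v_k+2v_k)=v_k$. Hence $T-S$ is totally symmetric with all traces zero, and it is orthogonal to $S$, because its inner product with each $\delta\otimes v$ term is a trace of $T-S$. Then
$$
  |\nabla\mathring A^\alpha|^2=|T-\tfrac1n\delta\otimes v|^2=|T-S|^2+|S-\tfrac1n\delta\otimes v|^2\ge |S-\tfrac1n\delta\otimes v|^2 .
$$
The orthogonality step holds because $S-\frac1n\delta\otimes v$ is again a combination of $\delta\otimes v$ terms.

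It remains to compute $|S-\frac1n\delta_{ij}v_k|^2$. We have $|\delta_{ij}v_k|^2=n|v|^2$, and $\langle\delta_{ij}v_k,\delta_{ik}v_j\rangle=|v|^2$. So
$$
  |S|^2=\frac{1}{(n+2)^2}\,(3n+6)|v|^2=\frac{3}{n+2}|v|^2 ,
$$
$$
  \langle S,\delta_{ij}v_k\rangle=\frac{1}{n+2}(n+2)|v|^2=|v|^2 .
$$
Therefore
$$
  |S-\tfrac1n\delta\otimes v|^2=\frac{3}{n+2}|v|^2-\frac2n|v|^2+\frac1n|v|^2=\frac{3n-(n+2)}{n(n+2)}|v|^2=\frac{2(n-1)}{n(n+2)}|v|^2 .
$$
Summing over $\alpha$, and using $\sum_\alpha|\nabla H^\alpha|^2=|\nabla^\perp\HH|^2$ in the adapted normal connection (the normal-connection terms are already included in the covariant derivative of $A^\alpha_{ij}$), gives the claimed bound.

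The only delicate point is verifying the orthogonality of $T-S$ to the trace part. This follows from $T-S$ being trace-free in every pair of indices, which in turn uses the total symmetry supplied by Codazzi in Euclidean space.
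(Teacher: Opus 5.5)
Your proof is correct and follows the same route the paper relies on: the paper cites Andrews--Baker, and in the proof of Lemma~\ref{lem:critical-gradient-equality} it writes out exactly your orthogonal splitting. There, $|\nabla\mathring A|^2-\frac{2(n-1)}{n(n+2)}|\nabla^\perp\HH|^2$ equals the squared norm of the totally symmetric trace-free part $\nabla_k^\perp A_{ij}-\frac{1}{n+2}(\delta_{jk}\nabla_i^\perp\HH+\delta_{ik}\nabla_j^\perp\HH+\delta_{ij}\nabla_k^\perp\HH)$, which is your $T-S$ summed over $\alpha$; your orthogonality argument via Codazzi symmetry and your computation $|S-\frac1n\delta\otimes v|^2=\frac{2(n-1)}{n(n+2)}|v|^2$ are both right.
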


We shall use the following weighted Liouville lemma to complete the proofs of the main theorems.

\begin{lemma}\label{lem:weighted-liouville}
Let $x:M^n\to\R^{n+p}$ be a complete connected self-shrinker immersion with polynomial volume growth and $|\HH|>0$.  If a smooth function $\psi$ satisfies $0\le\psi\le C_0$ and
$$
  \left( \mathcal L_\lambda+2\inner{\nabla\log |\HH|^2}{\nabla\cdot} \right)\psi\ge0,
$$
then $\psi$ is constant on $M$.
\end{lemma}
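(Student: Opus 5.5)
The plan is to exploit the symmetry of the operator $\mathcal L:=\mathcal L_\lambda+2\inner{\nabla\log|\HH|^2}{\nabla\cdot}$ with respect to the weighted measure $d\nu:=|\HH|^4d\mu_f$. For any smooth compactly supported $\varphi$ and smooth $u$,
$$
  \int_M \varphi\,\mathcal L u\,d\nu=-\int_M\inner{\nabla\varphi}{\nabla u}\,d\nu .
$$
To see this, write $|\HH|^4e^{-\lambda|x|^2/2}=e^{-F}$. Since $\nabla\frac{|x|^2}{2}=x^T$, we have $\nabla F=\lambda x^T-2\nabla\log|\HH|^2$, and hence $\mathcal L=\Delta-\inner{\nabla F}{\nabla\cdot}$. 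This is the drift Laplacian of $e^{-F}d\mu$.

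Next, fix a cutoff $\eta$ with $\eta\equiv1$ on $B_r$, supported in $B_{2r}$, and $|\nabla\eta|\le C/r$; here the balls are extrinsic or intrinsic, and the intrinsic version is available by completeness. Multiply $\mathcal L\psi\ge0$ by $\eta^2\psi\ge0$ and integrate against $d\nu$:
$$
  \int\eta^2|\nabla\psi|^2d\nu\le -2\int\eta\psi\inner{\nabla\eta}{\nabla\psi}d\nu
  \le \frac12\int\eta^2|\nabla\psi|^2d\nu+2\int\psi^2|\nabla\eta|^2d\nu .
$$
This gives
$$
  \int_{B_r}|\nabla\psi|^2d\nu\le \frac{8C_0^2C^2}{r^2}\int_{B_{2r}}|\HH|^4e^{-\lambda|x|^2/2}d\mu .
$$

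The crucial step, and the main obstacle, is to show that $\int_M|\HH|^4e^{-\lambda|x|^2/2}d\mu<\infty$. Here I would use the self-shrinker equation: $|\HH|=\lambda|x^\perp|\le\lambda|x|$, so
$$
  |\HH|^4e^{-\lambda|x|^2/2}\le\lambda^4|x|^4e^{-\lambda|x|^2/2}.
$$
Polynomial volume growth, $\operatorname{Vol}(B_r\cap M)\le Cr^d$ for extrinsic balls, makes this integrable by a dyadic shell decomposition. Using extrinsic balls requires properness, which is implicit in polynomial volume growth: finite extrinsic-ball volume together with completeness gives properness as used in the paper. So I would use extrinsic cutoffs $\eta(|x|)$, which are compactly supported on $M$ by properness and satisfy $|\nabla\eta|\le|\eta'|$.

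Letting $r\to\infty$, the right-hand side tends to $0$, so $\int_M|\nabla\psi|^2d\nu=0$. Since $|\HH|>0$ everywhere, the density of $d\nu$ is positive, and therefore $\nabla\psi\equiv0$. Connectedness then gives that $\psi$ is constant.

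The only delicate points are twofold. First, the cutoff support must be compact; this is handled by properness, or alternatively by intrinsic balls together with $|x|\ge$ the intrinsic distance minus a constant, which does not hold in general, so I would rely on properness. Second, the weight $|\HH|^4$ must be controlled, which the self-shrinker identity $|\HH|\le\lambda|x|$ handles.
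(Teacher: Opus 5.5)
Your proposal is correct and is essentially the paper's proof. Both use the symmetry of $\mathcal L_\lambda+2\inner{\nabla\log|\HH|^2}{\nabla\cdot}$ with respect to $|\HH|^4d\mu_f$, test with $\eta^2\psi$ and absorb by Young's inequality, and get finiteness of $\int_M|\HH|^4d\mu_f$ from $|\HH|\le\lambda|x|$ and polynomial volume growth.

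The one misstep is your retreat to extrinsic cutoffs at the end. Intrinsic cutoffs, which the paper uses, are compactly supported by Hopf--Rinow and need no comparison between $|x|$ and $d_M$, because the error term is bounded by the finite total integral $\int_M|\HH|^4d\mu_f$. By contrast, your claim that polynomial volume growth plus completeness alone gives properness is false for general immersions, and for self-shrinkers it needs a separate argument that uses the bound $|\HH|\le\lambda|x|$.
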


\begin{proof}
Since $|\HH|\le \lambda |x|$, the polynomial volume growth assumption implies
  $$
  \begin{aligned}
    \int_M |\HH|^4\,d\mu_f
    &=
    \int_M |\HH|^4 e^{-\frac{\lambda |x|^2}{2}}\,d\mu  \\
    &\le
    \lambda^4\int_M |x|^4 e^{-\frac{\lambda |x|^2}{2}}\,d\mu  \\
    &\le
    \lambda^4
    \sum_{k=0}^{\infty}
    (k+1)^4 e^{-\frac{\lambda k^2}{2}}
    \mu\left(x^{-1}\left(B_{k+1}^{\mathbb R^{n+p}}(0)\right)\right)
    <\infty,
  \end{aligned}
  $$
where $B_r^{\mathbb R^{n+p}}(0)$ denotes the Euclidean ball of radius $r$. Fix $o\in M$, and let $\eta_R\in C_c^\infty(M)$ be a cutoff function satisfying
  $$
    0\le \eta_R\le 1,\qquad
    \eta_R\equiv1\ \text{on }B_R^M(o),\qquad
    \operatorname{supp}\eta_R\subset B_{2R}^M(o),\qquad
    |\nabla\eta_R|\le \frac{C}{R}.
  $$

For every compactly supported smooth function $\phi$, integration by parts with respect to the measure $|\HH|^4d\mu_f$ gives
  $$
  \begin{aligned}
    \int_M
    \phi
    \left(
      \mathcal L_\lambda
      +
      2\inner{\nabla\log|\HH|^2}{\nabla\cdot}
    \right)\psi
    |\HH|^4\,d\mu_f =
    -\int_M\inner{\nabla\phi}{\nabla\psi}
    |\HH|^4\,d\mu_f .
  \end{aligned}
  $$
Taking $\phi=\eta_R^2\psi$, and using $\psi\ge0$, we obtain
  $$
  \begin{aligned}
  0
  &\le
  \int_M
  \eta_R^2\psi
  \left(
    \mathcal L_\lambda
    +
    2\inner{\nabla\log|\HH|^2}{\nabla\cdot}
  \right)\psi
  |\HH|^4\,d\mu_f  \\
  &=
  -\int_M\inner{\nabla(\eta_R^2\psi)}{\nabla\psi}
  |\HH|^4\,d\mu_f  \\
  &=
  -\int_M\eta_R^2|\nabla\psi|^2|\HH|^4\,d\mu_f
  -2\int_M\eta_R\psi
  \inner{\nabla\eta_R}{\nabla\psi}
  |\HH|^4\,d\mu_f .
  \end{aligned}
  $$
By Young's inequality,
  \begin{equation*}
    \begin{aligned}
      \int_M\eta_R^2|\nabla\psi|^2|\HH|^4\,d\mu_f
    &\le
    2\int_M\eta_R|\psi|\,|\nabla\eta_R|\,|\nabla\psi|
    |\HH|^4\,d\mu_f \\
    &\le\frac{1}{2}
    \int_M\eta_R^2|\nabla\psi|^2|\HH|^4\,d\mu_f +
    2\int_M\psi^2|\nabla\eta_R|^2|\HH|^4\,d\mu_f .
    \end{aligned}
  \end{equation*}
Hence
  $$
  \begin{aligned}
    \int_M\eta_R^2|\nabla\psi|^2|\HH|^4\,d\mu_f
    \le
    4\int_M\psi^2|\nabla\eta_R|^2|\HH|^4\,d\mu_f  \le
    \frac{4C^2C_0^2}{R^2}
    \int_M|\HH|^4\,d\mu_f .
  \end{aligned}
  $$
Letting $R\to\infty$, we obtain
  $$
    \int_M|\nabla\psi|^2|\HH|^4\,d\mu_f=0.
  $$
Since $|\HH|>0$, we have $\nabla\psi\equiv0$, so $\psi$ is constant on $M$.

\end{proof}

\begin{remark}
The proof is based on the standard cutoff argument of Yau \cite{Yau1976} and the related weighted versions due to Charalambous--Lu \cite{CharalambousLu2014}. Cheng--Peng \cite[Theorem 3.1]{ChengPeng2012} established an Omori--Yau type maximum principle for the self-shrinker drift operator $\mathcal L_\lambda$ and used it to remove the polynomial volume growth assumption from the gap theorem of Cao--Li \cite{CaoLi2013}.  Their principle does not apply directly here, since the quotient inequality involves the modified operator
$$
  \mathcal L_\lambda
  +
  2\inner{\nabla\log|\HH|^2}{\nabla\mathord\cdot}.
$$
Indeed, the present hypotheses provide no uniform control of $\nabla\log|\HH|^2$, so the additional drift term cannot be controlled along an Omori--Yau sequence for $\mathcal L_\lambda$.  We therefore use the weighted cutoff argument in Lemma \ref{lem:weighted-liouville}, for which polynomial volume growth supplies the required integrability.
\end{remark}

We prove the following properness lemma under a Ricci lower bound, in the spirit of Chodosh--Li--Minter--Stryker \cite{ChodoshLiMinterStryker2026}, by combining the second variation formula along minimizing geodesics with the self-shrinker equation.
\begin{lemma}\label{lem:ricci-properness}
Let $x:M^n\to\R^{n+p}$ be a complete connected self-shrinker immersion.  If $n\ge2$ and there exists a positive constant $\mu$ such that
$$
  \operatorname{Ric}\ge\mu|\HH|^2g
$$
on $M$, then the immersion $x$ is proper.
\end{lemma}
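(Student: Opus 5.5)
The plan is to show that $|x|$ grows at least linearly in the intrinsic distance from a fixed point $o\in M$. Properness then follows: every closed intrinsic ball is compact by completeness (Hopf--Rinow), and the preimage of a bounded set in $\R^{n+p}$ will lie inside such a ball. Fix $o$, let $q\in M$ with $d(o,q)=L>2$, and let $\gamma:[0,L]\to M$ be a unit-speed minimizing geodesic from $o$ to $q$. Set
$$u(s)=\tfrac12|x(\gamma(s))|^2 .$$
For a self-shrinker, $\nabla_i\nabla_j\tfrac12|x|^2=\delta_{ij}+\inner{A_{ij}}{x^\perp}=\delta_{ij}-\tfrac1\lambda\inner{A_{ij}}{\HH}$. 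Hence along the geodesic
$$u''(s)=1-\tfrac1\lambda\inner{A(\gamma',\gamma')}{\HH}.$$

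\emph{Step 1 (pointwise algebra).} The Gauss equation gives $\operatorname{Ric}(\gamma',\gamma')=\inner{\HH}{A(\gamma',\gamma')}-|A(\gamma',\cdot)|^2$. The hypothesis gives $\operatorname{Ric}\ge\mu|\HH|^2\ge0$, so
$$|A(\gamma',\cdot)|^2\le\inner{\HH}{A(\gamma',\gamma')}\le|\HH|\,|A(\gamma',\cdot)|.$$
Therefore $|A(\gamma',\cdot)|\le|\HH|$, and consequently $\inner{A(\gamma',\gamma')}{\HH}\le|\HH|^2$. This yields the key differential inequality
$$u''\ge1-\tfrac1\lambda|\HH|^2 \quad\text{along }\gamma.$$
I expect this step to be the main point. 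One must realize that a Ricci lower bound, which a priori only bounds $\inner{A(\gamma',\gamma')}{\HH}$ from below, also controls it from above by $|\HH|^2$ via the Gauss equation.

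\emph{Step 2 (integral control of $|\HH|^2$ from second variation).} Since $\gamma$ is minimizing, the second variation with vector fields $\varphi E_k$ ($E_k$ parallel orthonormal and normal to $\gamma'$, $\varphi(0)=\varphi(L)=0$), summed over $k$, gives
$$\int_0^L\varphi^2\operatorname{Ric}(\gamma',\gamma')\,ds\le(n-1)\int_0^L(\varphi')^2\,ds .$$
Take $\varphi=s$ on $[0,1]$, $\varphi=1$ on $[1,L-1]$, and $\varphi=L-s$ on $[L-1,L]$. Then $\int(\varphi')^2=2$, and therefore
$$\mu\int_1^{L-1}|\HH|^2\,ds\le2(n-1).$$
On $[0,1]$ the geodesic stays in the compact ball $\overline{B_1^M(o)}$, so $\int_0^1|\HH|^2\le C_o$ with $C_o$ independent of $q$. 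Altogether,
$$\int_0^{L-1}|\HH|^2\,ds\le C_1:=C_o+\frac{2(n-1)}{\mu},$$
independently of $L$.

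\emph{Step 3 (integration).} Note that $u'(0)=\inner{x(o)}{\gamma'(0)}\ge-|x(o)|$. Integrating the inequality of Step 1 twice on $[0,t]$ with $t\le L-1$, and using Step 2, gives
$$u(t)\ge u(0)-|x(o)|\,t+\frac{t^2}{2}-\frac{C_1}{\lambda}\,t .$$
Thus $|x(\gamma(L-1))|^2\ge (L-1)^2-C_2(L-1)-C_3$. Since $|x|$ is $1$-Lipschitz along unit-speed curves, $|x(q)|\ge|x(\gamma(L-1))|-1$. Hence $|x(q)|\ge L-C_4$ for all $q$ with $d(o,q)=L$ large, where $C_4$ depends only on $o,n,\mu,\lambda$.

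\emph{Conclusion.} It follows that $x^{-1}(\overline{B_K(0)})\subset \overline{B^M_{K+C_4}(o)}$, which is compact, so $x$ is proper. The only delicate bookkeeping is the endpoint segment near $q$, where no a priori bound on $|\HH|$ is available. Evaluating at $\gamma(L-1)$ and using the Lipschitz bound on the last unit segment avoids this issue.
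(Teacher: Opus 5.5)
Your proof is correct and follows essentially the same route as the paper. Both use the same cutoff $\varphi$ in the second variation along a minimizing geodesic, the Hessian identity $u''=1-\frac1\lambda\inner{\HH}{A(\gamma',\gamma')}$, the pointwise bound $0\le\inner{\HH}{A(v,v)}\le|\HH|^2\le\frac1\mu\operatorname{Ric}(v,v)$, and evaluation at $\gamma(L-1)$ followed by the unit-length Lipschitz step. The differences are only cosmetic: you get $\inner{\HH}{A(v,v)}\le|\HH|^2$ by Cauchy--Schwarz instead of from the trace of a nonnegative bilinear form, and you integrate $|\HH|^2$ rather than $\operatorname{Ric}$ along $\gamma$.
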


\begin{proof}
If $M$ is compact, the conclusion is immediate.  We therefore assume that $M$ is noncompact.  For every unit tangent vector $v$, the Gauss equation and the Ricci lower bound give
$$
  \inner{\HH}{A(v,v)}=\operatorname{Ric}(v,v)
  +\sum_i|A(v,e_i)|^2\ge\mu|\HH|^2.
$$
Thus $\inner{\HH}{A(\,\cdot\,,\,\cdot\,)}$ is a nonnegative symmetric bilinear form whose trace equals $|\HH|^2$, and hence
\begin{equation}\label{eq:properness-pointwise}
  0\le\inner{\HH}{A(v,v)}\le|\HH|^2
  \le\frac{1}{\mu}\operatorname{Ric}(v,v).
\end{equation}

Fix $o\in M$. For each $q\in M$ with $\ell:=d_M(o,q)>2$, let $\gamma:[0,\ell]\to M$ be a minimizing geodesic from $o$ to $q$, parametrized by arc length. Along $\gamma$, extend $E_1=\dot\gamma$ to a parallel orthonormal frame
$E_1,E_2,\ldots,E_n$, and set
$$
  \varphi(s)
  =
  \begin{cases}
    s, & 0\le s\le1,\\
    1, & 1\le s\le\ell-1,\\
    \ell-s, & \ell-1\le s\le\ell.
  \end{cases}
$$

Let $I$ denote the index form of $\gamma$. Since $\gamma$ is minimizing, its index form is nonnegative. Then for each $a=2,\ldots,n$, the second variation formula gives
$$
\begin{aligned}
  0
  &\le
  I(\varphi E_a,\varphi E_a)\\
  &=
  \int_0^\ell
  \left(
    |\nabla_{\dot\gamma}(\varphi E_a)|^2
    -
    \varphi^2
    \inner{R(E_a,\dot\gamma)\dot\gamma}{E_a}
  \right)\,ds\\
  &=
  \int_0^\ell
  \left(
    |\varphi'|^2
    -
    \varphi^2
    \inner{R(E_a,\dot\gamma)\dot\gamma}{E_a}
  \right)\,ds,
\end{aligned}
$$
where the last equality follows from $\nabla_{\dot\gamma}E_a=0$.  Summing over $a=2,\ldots,n$ and using
$$
  \operatorname{Ric}(\dot\gamma,\dot\gamma)
  =\sum_{a=2}^n\inner{R(E_a,\dot\gamma)\dot\gamma}{E_a},
$$
we obtain
$$
  \int_0^\ell\varphi^2\operatorname{Ric}(\dot\gamma,\dot\gamma)\,ds
  \le(n-1)\int_0^\ell|\varphi'|^2\,ds.
$$
Moreover, $\operatorname{Ric}\ge0$ and $\varphi=1$ on $[1,\ell-1]$, while $|\varphi'|=1$ on $(0,1)$ and $(\ell-1,\ell)$, and $\varphi'=0$ elsewhere.  Therefore
$$
\begin{aligned}
  \int_1^{\ell-1}
  \operatorname{Ric}(\dot\gamma,\dot\gamma)\,ds
  &\le
  \int_0^\ell
  \varphi^2\operatorname{Ric}(\dot\gamma,\dot\gamma)\,ds\\
  &\le
  (n-1)\int_0^\ell|\varphi'|^2\,ds
  \\
  &=
  (n-1)\left(
    \int_0^1 1\,ds
    +
    \int_{\ell-1}^\ell 1\,ds
  \right)
  \\
  &=
  2(n-1).
\end{aligned}
$$
Since the unit tangent bundle of $\overline{B_1^M(o)}$ is compact by the Hopf--Rinow theorem and $\gamma([0,1])\subset\overline{B_1^M(o)}$, the preceding estimate gives the following bound for $1\le t\le\ell-1$
\begin{equation}\label{eq:properness-ricci-integral}
\begin{aligned}
  \int_0^t\operatorname{Ric}(\dot\gamma,\dot\gamma)\,ds
  &=
  \int_0^1\operatorname{Ric}(\dot\gamma,\dot\gamma)\,ds
  +
  \int_1^t\operatorname{Ric}(\dot\gamma,\dot\gamma)\,ds\\
  &\le
  \max_{\substack{
    y\in\overline{B_1^M(o)},\ v\in T_yM\\
    |v|=1
  }}
  \operatorname{Ric}_y(v,v)
  +2(n-1)
  =:C_0.
\end{aligned}
\end{equation}
Here $C_0$ is independent of $q$ and of the minimizing geodesic $\gamma$.

Along $\gamma$, the Gauss formula and the self-shrinker equation yield
$$
\begin{aligned}
  \frac{d^2}{ds^2}
  \frac{|x(\gamma(s))|^2}{2}
  &=
  |dx(\dot\gamma)|^2
  +
  \inner{x}{
    dx(\nabla_{\dot\gamma}\dot\gamma)
    +A(\dot\gamma,\dot\gamma)
  }\\
  &=
  1
  +
  \inner{x^\perp}{A(\dot\gamma,\dot\gamma)}\\
  &=
  1
  -
  \frac{1}{\lambda}
  \inner{\HH}{A(\dot\gamma,\dot\gamma)}.
\end{aligned}
$$
Integrating the preceding second derivative identity from $0$ to $t$ yields
\[
\begin{aligned}
  \frac{d}{dt}\frac{|x(\gamma(t))|^2}{2}-\left.\frac{d}{ds}\frac{|x(\gamma(s))|^2}{2}\right|_{s=0}&=\int_0^t\frac{d^2}{ds^2}\frac{|x(\gamma(s))|^2}{2}\,ds\\
  &=t-\frac{1}{\lambda}\int_0^t\inner{\HH}{A(\dot\gamma,\dot\gamma)}\,ds. 
\end{aligned}
\]
Since $|dx(\dot\gamma)|=1$, the Cauchy--Schwarz inequality gives
$$
  \left.\frac{d}{ds}\frac{|x(\gamma(s))|^2}{2}\right|_{s=0}
  =\inner{x(o)}{dx(\dot\gamma(0))}\ge-|x(o)|.
$$
For $1\le t\le\ell-1$, integrating \eqref{eq:properness-pointwise} along $\gamma$ and using \eqref{eq:properness-ricci-integral}, we obtain
$$
  \int_0^t\inner{\HH}{A(\dot\gamma,\dot\gamma)}\,ds\le\frac{1}{\mu}\int_0^t\operatorname{Ric}(\dot\gamma,\dot\gamma)\,ds\le\frac{C_0}{\mu}.
$$
Combining these bounds with the identity above, we obtain, for $1\le t\le\ell-1$,
$$
  \frac{d}{dt}\frac{|x(\gamma(t))|^2}{2}
  \ge-|x(o)|+t-\frac{C_0}{\lambda\mu}.
$$
Integrating once more from $1$ to $t$ and using $|x(\gamma(1))|^2\ge0$ gives, for $1\le t\le\ell-1$,
$$
  |x(\gamma(t))|^2\ge
  t^2-2\left(|x(o)|+\frac{C_0}{\lambda\mu}\right)t+2\left(|x(o)|+\frac{C_0}{\lambda\mu}\right)-1.
$$

Hence $|x(\gamma(t))|\ge t-C$ for all sufficiently large $t\le\ell-1$, where $C$ is independent of $q$ and $\gamma$.  Since $x$ is isometric, the Euclidean length of $x\circ\gamma$ on $[\ell-1,\ell]$ is one.  Thus, for all sufficiently large $\ell$,
$$
  |x(q)|\ge|x(\gamma(\ell-1))|-1\ge\ell-C-2.
$$
After enlarging $C$ to include the points in a fixed closed intrinsic ball, we obtain the estimate
$$
  d_M(o,q)\le|x(q)|+C,
  \qquad q\in M.
$$

Let $K\subset\R^{n+p}$ be compact and choose $R>0$ such that $K\subset\overline{B_R^{\R^{n+p}}(0)}$.  For $q\in x^{-1}(K)$, we have $|x(q)|\le R$ and hence $d_M(o,q)\le R+C$.  Thus $x^{-1}(K)\subset\overline{B_{R+C}^M(o)}$.  Since $x^{-1}(K)$ is closed in $M$ and $\overline{B_{R+C}^M(o)}$ is compact by Hopf--Rinow, $x^{-1}(K)$ is compact.  Therefore $x$ is proper. 
\end{proof}

Under the Ricci lower bounds, the following lemma shows that the mean curvature vector is nowhere vanishing unless the immersion is a linear subspace. 

\begin{lemma}\label{lem:mean curvature-dichotomy}
Let $x:M^n\to\R^{n+p}$ be a complete connected self-shrinker immersion. Suppose that, for some constant $0\le\mu<\frac{1}{n}$,
$$
  \operatorname{Ric}\ge\mu|\HH|^2g
$$
holds on $M$.  Then either $x(M)$ is an $n$-dimensional linear subspace through the origin or $|\HH|>0$ on $M$.
\end{lemma}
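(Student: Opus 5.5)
The plan is a pointwise algebraic bound followed by an ODE-type unique continuation argument along curves. First, combine the Gauss equation with the Ricci hypothesis, as at the start of the proof of Lemma \ref{lem:ricci-properness}. For each unit tangent vector $v$,
$$
  \inner{\HH}{A(v,v)}=\operatorname{Ric}(v,v)+\sum_i|A(v,e_i)|^2\ge\mu|\HH|^2+\sum_i|A(v,e_i)|^2 .
$$
Taking $v=e_j$ and summing over $j$ gives $|\HH|^2\ge n\mu|\HH|^2+|A|^2$, that is,
$$
  |A|^2\le(1-n\mu)|\HH|^2 .
$$
Since $\mu<\frac1n$, the coefficient is positive, though all we need is that it is a finite constant. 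In particular $A$ vanishes wherever $\HH$ vanishes, and $|A|\le|\HH|$ everywhere.

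Second, I would use the first-order structure of the shrinker equation. Differentiating $\HH=-\lambda x^\perp$ in the normal bundle gives
$$
  \nabla^\perp_{e_i}\HH=\lambda\sum_j\inner{x}{e_j}A(e_i,e_j),
$$
the standard identity $\nabla^\perp x^\perp=-A(x^T,\cdot)$. Hence
$$
  |\nabla^\perp\HH|\le\lambda|x|\,|A|\le\lambda|x|\,|\HH|,
$$
and therefore $\bigl|\nabla|\HH|^2\bigr|\le2\lambda|x|\,|\HH|^2$.

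Third, I would show that the zero set $Z=\{|\HH|=0\}$ is open. It is clearly closed. Fix $q_0\in Z$ and a geodesic ball around $q_0$ on which $|x|\le K$. For any arclength curve $c$ in that ball starting at $q_0$, the function $f(s)=|\HH|^2(c(s))$ satisfies $|f'|\le2\lambda Kf$ and $f(0)=0$. By Gronwall's inequality $f\equiv0$, so $Z$ contains a neighbourhood of $q_0$. Since $M$ is connected, either $Z=\emptyset$, which means $|\HH|>0$ on $M$, or $Z=M$.

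Finally, suppose $Z=M$. Then $\HH\equiv0$, so $A\equiv0$ by the first step. Thus $x$ is a totally geodesic isometric immersion of a complete connected manifold. Its image is therefore an affine $n$-plane, and $x$ is a local isometry onto it which, by completeness, is a covering. Moreover $x^\perp=-\HH/\lambda=0$, so the position vector is everywhere tangent to the plane, which forces the plane to pass through the origin. Hence $x(M)$ is a linear subspace.

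The main point requiring care is the unique-continuation step. It hinges on $\nabla^\perp\HH$ being controlled linearly by $|\HH|$, which comes from combining the shrinker identity with the algebraic bound $|A|\lesssim|\HH|$; the rest is routine.
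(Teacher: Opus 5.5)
Your proof is correct and follows essentially the same route as the paper. Both arguments use the traced Gauss--Ricci bound $|A|^2\le(1-n\mu)|\HH|^2$, then the identity $\nabla^\perp_X\HH=\lambda A(X,x^T)$ to control $\nabla^\perp\HH$ linearly by $|\HH|$, then Gronwall along curves, and finally $A\equiv0$ together with $x^\perp\equiv0$ when $\HH\equiv0$. The only difference is cosmetic: you apply Gronwall to the smooth scalar $|\HH|^2$ inside an open--closed argument, whereas the paper applies it to $|\HH|$ via normal parallel transport, and your version neatly sidesteps the nonsmoothness of $|\HH|$ at its zeros.
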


\begin{proof}
If $\HH\equiv0$, then $\operatorname{Ric}\ge0$, while the Gauss equation gives $R=-|A|^2\leq0$.  It follows that $A\equiv0$.  The self-shrinker equation gives $x^\perp\equiv0$. By completeness, $x(M)$ is an $n$-dimensional linear subspace through the origin.

We now consider the case $\HH\not\equiv0$.  Taking the trace of the Ricci lower bound and using the Gauss equation yields $|A|^2\le(1-n\mu)|\HH|^2$.  Differentiating the self-shrinker equation gives $\nabla_X^\perp\HH=\lambda A(X,x^T)$ for every tangent vector $X$. Thus
$$
  |\nabla_X^\perp\HH|\le\lambda\sqrt{1-n\mu}\,|x^T|\,|X|\,|\HH|.
$$
Suppose that $\HH(q)=0$ at some point $q\in M$.  For any $q'\in M$, choose a piecewise smooth curve $\gamma:[0,1]\to M$ joining $q$ to $q'$.  On each smooth segment, let $P_{s,t}:N_{\gamma(s)}M\to N_{\gamma(t)}M$ denote normal parallel transport.  Then
$$
  \HH(\gamma(t))=P_{0,t}\HH(q)
  +\int_0^tP_{s,t}\left(\nabla_{\dot\gamma(s)}^\perp\HH\right)\,ds.
$$

Since $P_{s,t}$ is an isometry, on each smooth segment there is a finite constant $C$ such that
$$
|\HH(\gamma(t))|\le C\int_0^t|\HH(\gamma(s))|\,ds.
$$  
Setting $F(t)=\int_0^t|\HH(\gamma(s))|\,ds$, we have $F(0)=0$, $F\ge0$, and $F'\le CF$. Therefore
$$
\bigl(e^{-Ct}F(t)\bigr)'\le0.
$$ 
Since $e^{-Ct}F(t)\ge0$ and its value at $t=0$ is zero, it follows that $F\equiv0$ and $\HH(q')=0$.   The arbitrariness of $q'$ implies $\HH\equiv0$, a contradiction.  Thus $|\HH|>0$ on $M$.
\end{proof}

The following lemma identifies a self-shrinker with parallel mean curvature vector as a compact minimal submanifold of the shrinker sphere.

\begin{lemma}\label{lem:spherical-reduction}
Let $x:M^n\to\R^{n+p}$ be a complete connected self-shrinker immersion with $|\HH|>0$.  Suppose that, for some constant $\mu>0$,
$$
  \operatorname{Ric}\ge\mu|\HH|^2g,
  \qquad
  \nabla^\perp\HH\equiv0.
$$
Then $M$ is compact and $x(M)$ is a minimal submanifold of $\mathbb S^{n+p-1}\left(\sqrt{\frac{n}{\lambda}}\right)$, with $|\HH|^2=n\lambda$.
\end{lemma}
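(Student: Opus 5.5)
Under the hypotheses, $|\HH|$ is constant, because $\nabla|\HH|^2=2\inner{\nabla^\perp\HH}{\HH}=0$ and $M$ is connected. Write $|\HH|^2=c>0$. The plan is to show first that $x^T$ is a conformal field whose divergence is forced, which fixes $|x|$, and then to read off compactness and minimality in the sphere.

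\textbf{Step 1.} Differentiating the self-shrinker equation gives $\nabla^\perp_X\HH=\lambda A(X,x^T)$, as in the proof of Lemma~\ref{lem:mean curvature-dichotomy}. Hence $A(X,x^T)=0$ for every tangent $X$. Pairing with $\HH$ gives $\inner{\HH}{A(x^T,x^T)}=0$. The proof of Lemma~\ref{lem:ricci-properness} shows $\inner{\HH}{A(v,v)}\ge\mu|\HH|^2|v|^2$, so
$$
0=\inner{\HH}{A(x^T,x^T)}\ge\mu c\,|x^T|^2 ,
$$
and therefore $x^T\equiv0$. Thus $\nabla\frac{|x|^2}{2}=x^T=0$, and $|x|^2$ is constant on $M$, say $|x|^2=r^2$.

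\textbf{Step 2.} Next identify the constants. Since $x^T=0$, we have $x=x^\perp=-\HH/\lambda$, so $r^2=|\HH|^2/\lambda^2$. Alternatively, $\Delta\frac{|x|^2}{2}=n+\inner{x}{\HH}=n-\frac{1}{\lambda}|\HH|^2$, and $\Delta|x|^2=0$ forces $|\HH|^2=n\lambda$. Consequently $r^2=n/\lambda$, so $x(M)\subset\mathbb S^{n+p-1}\bigl(\sqrt{n/\lambda}\bigr)$.

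\textbf{Step 3.} Minimality in the sphere. The mean curvature vector of $M$ in the sphere is the component of $\HH$ tangent to the sphere, namely $\HH-\inner{\HH}{x/|x|}\,x/|x|$. Since $\HH=-\lambda x$ is radial, this component vanishes, so $x(M)$ is minimal in the sphere.

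\textbf{Step 4.} Compactness. The image is bounded, and $x$ is proper by Lemma~\ref{lem:ricci-properness}. So $M=x^{-1}\bigl(\overline{B_r(0)}\bigr)$ is compact. Alternatively, $\operatorname{Ric}\ge\mu n\lambda\,g>0$, and Bonnet--Myers gives compactness directly.

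The only step needing care is Step 1, deducing $x^T=0$. It relies on the pointwise positivity $\inner{\HH}{A(v,v)}\ge\mu|\HH|^2$, which comes from the Gauss equation together with the Ricci lower bound and $|\HH|>0$. Everything after that is routine.
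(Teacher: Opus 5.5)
Your proposal is correct and follows essentially the same route as the paper. You kill $x^T$ using $A(\cdot,x^T)=0$ and the Ricci lower bound; phrasing this through $\inner{\HH}{A(v,v)}\ge\mu|\HH|^2|v|^2$ is the same Gauss-equation identity. You then use $\Delta|x|^2=0$ to get $|x|^2=n/\lambda$ and $|\HH|^2=n\lambda$, read off minimality from $\HH=-\lambda x$, and obtain compactness via Bonnet--Myers, with the properness-based alternative also valid but unnecessary.
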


\begin{proof}
Since $\nabla^\perp\HH\equiv0$, differentiating the self-shrinker equation gives $A(X,x^T)=0$ for every tangent vector $X$. The Gauss equation gives
$$
  \operatorname{Ric}(x^T,x^T)=\inner{\HH}{A(x^T,x^T)}
  -\sum_i|A(x^T,e_i)|^2=0.
$$
The Ricci lower bound now gives $0=\operatorname{Ric}(x^T,x^T)\ge\mu|\HH|^2|x^T|^2$.  Since $\mu>0$ and $|\HH|>0$, it follows that $x^T\equiv0$.

Since $\nabla|x|^2=2x^T=0$, $|x|$ is constant, and
$$
  0=\frac{1}{2}\Delta|x|^2=n+\inner{\HH}{x}=n-\lambda|x|^2.
$$
Thus $|x|^2=\frac{n}{\lambda}$.  Comparing $\HH=-\lambda x$ with the Euclidean mean curvature vector of a submanifold of this sphere shows that the immersion is minimal in the sphere, and $|\HH|^2=\lambda^2|x|^2=n\lambda$.  Finally, $\operatorname{Ric}\ge n\lambda\mu g>0$, so the Bonnet--Myers theorem implies that $M$ is compact.
\end{proof}

\section{Algebraic estimates under Ricci curvature pinching}\label{sec:ricci-pinching}

This section establishes the joint pointwise estimate needed for Theorem \ref{thm:ricci-improved} and determines the corresponding dimensional constant $\mu_n$. Then we develop the critical codimension-two algebraic estimate.

We continue to use the normal frame adapted to the mean curvature vector. The zeroth-order expression in the quotient identity \eqref{eq:quotient} is the left-hand side of \eqref{hd-eq:algebra}.  The Li--Li matrix inequality \cite{LiLi1992} gives
$$
  \sum_{\alpha,\beta\ge n+2}\left[
    \left(\sum_{i,j}A_{ij}^\alpha A_{ij}^\beta\right)^2
    +|[A^\alpha,A^\beta]|^2\right]\le\frac{3}{2}|A^\perp|^4.
$$
At a fixed point, choose a tangent frame in which $\mathring A^{n+1}$ is diagonal.  For every $\alpha\ge n+2$, the Cauchy--Schwarz inequality gives
$$
  \left(\sum_{i,j}\mathring A_{ij}^{n+1}A_{ij}^\alpha\right)^2
  =\left(\sum_i\mathring A_{ii}^{n+1}A_{ii}^\alpha\right)^2
  \le|\mathring A^{n+1}|^2\sum_i\left(A_{ii}^\alpha\right)^2.
$$
Moreover,
$$
  |[\mathring A^{n+1},A^\alpha]|^2
  =\sum_{i\ne j}\left(\mathring A_{ii}^{n+1}-\mathring A_{jj}^{n+1}\right)^2
  \left(A_{ij}^\alpha\right)^2
  \le\left(\max_i\mathring A_{ii}^{n+1}-\min_i\mathring A_{ii}^{n+1}\right)^2
  \sum_{i\ne j}\left(A_{ij}^\alpha\right)^2.
$$
Summing over $\alpha\ge n+2$ and substituting these estimates into \eqref{hd-eq:algebra} gives
\begin{equation}\label{eq:ricci-zero-order-first-bound}
\begin{aligned}
  &\sum_{\alpha,\beta}
  \left(\sum_{i,j}A_{ij}^\alpha A_{ij}^\beta\right)^2
  +\rho^\perp
  -
  \frac{|A|^2}{|\HH|^2}
  \sum_{\alpha,\beta}
  \left(\sum_{i,j}A_{ij}^\alpha A_{ij}^\beta\right)
  H^\alpha H^\beta                                      \\
  \le&|A^\perp|^2
  \Bigg[
    2\max\left\{
      |\mathring A^{n+1}|^2,
      \left(
        \max_i\mathring A_{ii}^{n+1}
        -
        \min_i\mathring A_{ii}^{n+1}
      \right)^2
    \right\}
    -
    |\mathring A^{n+1}|^2
    -
    \frac{|\HH|^2}{n}
    +
    \frac{3}{2}|A^\perp|^2
  \Bigg].
\end{aligned}
\end{equation}

\begin{proposition}\label{prop:ricci-zero-order}
Let $n\ge3$, and let $\mu$ satisfy
$$
  \frac{n-2}{n^2}<\mu<\frac{n-1}{n^2}.
$$
Assume $|\HH|>0$, and use the adapted normal frame introduced above at the point under consideration.  If
$$
  \operatorname{Ric}\ge\mu|\HH|^2g,
$$
then
$$
\begin{aligned}
  &\frac{|A|^2}{|\HH|^2}
  \sum_{\alpha,\beta}
  \left(\sum_{i,j}A_{ij}^\alpha A_{ij}^\beta\right)
  H^\alpha H^\beta
  -
  \sum_{\alpha,\beta}
  \left(\sum_{i,j}A_{ij}^\alpha A_{ij}^\beta\right)^2
  -\rho^\perp\\
  \ge&
  |A^\perp|^2|\HH|^2
  \Bigg\{
    \frac{1}{n}
    -
    \frac{3}{2}
    \left(
      \frac{n-1}{n}-n\mu
    \right)-
    \frac{15n}{2(n+3)}
    \left[
      \frac{1}{n}
      -
      \frac{1-\sqrt{1-4\mu}}{2}
    \right]^2
  \Bigg\}.
\end{aligned}
$$
\end{proposition}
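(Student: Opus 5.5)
The plan is to start from the upper bound \eqref{eq:ricci-zero-order-first-bound} and reduce everything to a finite-dimensional optimization over the eigenvalues of $\mathring A^{n+1}/|\HH|$. Negating \eqref{eq:ricci-zero-order-first-bound}, the left-hand side of the proposition is at least
$$
|A^\perp|^2\Big[\tfrac{|\HH|^2}{n}+|\mathring A^{n+1}|^2-2\max\{|\mathring A^{n+1}|^2,D^2\}-\tfrac32|A^\perp|^2\Big],
\qquad D:=\max_i\mathring A^{n+1}_{ii}-\min_i\mathring A^{n+1}_{ii}.
$$

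\emph{Step 1: control of $|A^\perp|^2$.} Tracing the Ricci bound and using the Gauss equation gives $|A|^2\le(1-n\mu)|\HH|^2$. Hence
$$
|A^\perp|^2=|\mathring A|^2-|\mathring A^{n+1}|^2\le\Big(\tfrac{n-1}{n}-n\mu\Big)|\HH|^2-|\mathring A^{n+1}|^2 .
$$
Substituting this into the $-\tfrac32|A^\perp|^2$ term, the bracket is at least
$$
\tfrac{|\HH|^2}{n}-\tfrac32\Big(\tfrac{n-1}{n}-n\mu\Big)|\HH|^2+\tfrac52|\mathring A^{n+1}|^2-2\max\{|\mathring A^{n+1}|^2,D^2\}.
$$
It therefore suffices to prove
$$
2\max\{|\mathring A^{n+1}|^2,D^2\}-\tfrac52|\mathring A^{n+1}|^2\le\frac{15n}{2(n+3)}\,c^2|\HH|^2,
\qquad c:=\frac1n-\frac{1-\sqrt{1-4\mu}}{2}.
$$
If the maximum equals $|\mathring A^{n+1}|^2$, the left side is $-\tfrac12|\mathring A^{n+1}|^2\le0$, so only the case where the maximum is $D^2$ needs work.

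\emph{Step 2: eigenvalue bounds.} Let $v=e_i$ be a unit eigenvector of $\mathring A^{n+1}$. As in Lemma \ref{lem:ricci-properness},
$$
\langle\HH,A(v,v)\rangle=\operatorname{Ric}(v,v)+\sum_j|A(v,e_j)|^2\ge\mu|\HH|^2+(A^{n+1}_{ii})^2 .
$$
Writing $a=A^{n+1}_{ii}/|\HH|$, this says $a-a^2\ge\mu$, so $a\in[\tfrac{1-s}{2},\tfrac{1+s}{2}]$ with $s=\sqrt{1-4\mu}$. Consequently the normalized eigenvalues $\kappa_i=\mathring A^{n+1}_{ii}/|\HH|=a_i-\tfrac1n$ satisfy $\kappa_i\ge -c$. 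The hypothesis $\mu<\frac{n-1}{n^2}$ is exactly what makes $c>0$, since it is equivalent to $s>1-\tfrac2n$. The condition $\mu>\frac{n-2}{n^2}$ presumably matters only for keeping the final constant positive later, and is not needed here.

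\emph{Step 3: the optimization, which I expect to be the main obstacle.} We must maximize
$$
2(M-m)^2-\tfrac52\sum_i\kappa_i^2
$$
subject to $\sum_i\kappa_i=0$ and $\kappa_i\ge -c$, where $M=\max_i\kappa_i\ge0$ and $m=\min_i\kappa_i\in[-c,0]$. For fixed $M,m$, the sum of squares is minimized when the remaining $n-2$ eigenvalues are equal, which gives
$$
\sum_i\kappa_i^2\ge M^2+m^2+\frac{(M+m)^2}{n-2}.
$$
This reduces the problem to maximizing a homogeneous quadratic $f(M,m)$ in two variables.

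I would first show that $f$ is maximized on the boundary $m=-c$. For this I would argue via the sign of $\partial_m f$ on the admissible region, or equivalently by comparing $f$ with the value at $m=-c$ after maximizing in $M$; checking this monotonicity carefully is the delicate part. With $m=-c$ fixed, $f$ is concave in $M$. Setting the derivative to zero, $4(M+c)-5M-\frac{5(M-c)}{n-2}=0$, gives
$$
M=\frac{(4n-3)c}{n+3}.
$$
This value is admissible, since it is at most the a priori bound $(n-1)c$ that follows from $\sum_i\kappa_i=0$ and $\kappa_i\ge-c$. Plugging it back in should yield exactly $\frac{15n}{2(n+3)}c^2$. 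As a sanity check, as $n\to\infty$ this gives $M\approx4c$ and $f\approx50c^2-\tfrac{85}{2}c^2=\tfrac{15}{2}c^2$, which matches.

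Multiplying back by $|\HH|^2$ and $|A^\perp|^2$ gives the stated inequality.
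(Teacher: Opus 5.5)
Your reduction matches the paper's proof step for step. You use the same negation of \eqref{eq:ricci-zero-order-first-bound}, the same trace bound on $|A^\perp|^2$, the same eigenvalue bound $\kappa_i\ge -c$ from $a-a^2\ge\mu$, the same trivial first case, and the same Cauchy--Schwarz bound on the middle $n-2$ eigenvalues. You are also right that $\mu>\frac{n-2}{n^2}$ is never used. The one step you leave open is the claim that $f$ is maximized on $m=-c$. Your first suggested route, via the sign of $\partial_m f$, fails. Writing $u=-m$,
\[
\partial_m f=-4M-m-\frac{5(M+m)}{n-2}=\frac{(n+3)u-(4n-3)M}{n-2},
\]
which is positive whenever $M<\frac{n+3}{4n-3}u$. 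For $n\ge3$ this range contains genuine configurations in the case $D^2>|\mathring A^{n+1}|^2$. An example is the eigenvalues $-u$, $M$, and $n-2$ copies of $\frac{u-M}{n-2}$, with $M$ slightly larger than $\frac{u}{n-1}$; at $M=\frac{u}{n-1}$ one has $D^2/|\mathring A^{n+1}|^2=\frac{n}{n-1}>1$. So at fixed $M$, lowering $m$ toward $-c$ can decrease $f$, and no monotonicity-in-$m$ argument is available.

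What closes the step is homogeneity, which is exactly how the paper finishes via the ratio $t=M/(-m)$. You have $f(M,-u)=u^2g(M/u)$ with
\[
g(t)=2(t+1)^2-\frac52\Bigl(t^2+1+\frac{(t-1)^2}{n-2}\Bigr)=\frac{15n}{2(n+3)}-\frac{n+3}{2(n-2)}\Bigl(t-\frac{4n-3}{n+3}\Bigr)^2 .
\]
Hence $f\le\frac{15n}{2(n+3)}u^2\le\frac{15n}{2(n+3)}c^2$ for every $M$ and every $0<u\le c$. This needs no boundary analysis and no check that the maximizer $M=\frac{4n-3}{n+3}c$ is admissible. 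Your second route also works and is the same argument: maximize in $M$ at each fixed $m$ to get $\frac{15n}{2(n+3)}m^2$, then use $|m|\le c$. With that route, your proof is complete and coincides with the paper's.
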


\begin{proof}
Since each $A^\alpha$ is symmetric, $\left(\frac{A^\alpha}{|\HH|}\right)^2$ is positive semidefinite.  Hence, in the adapted normal frame, the Gauss equation and the Ricci lower bound give
$$
  \frac{A^{n+1}}{|\HH|}
  -\left(\frac{A^{n+1}}{|\HH|}\right)^2
  =\frac{\operatorname{Ric}}{|\HH|^2}
  +\sum_{\alpha\ge n+2}\left(\frac{A^\alpha}{|\HH|}\right)^2\ge\mu.
$$
Choose a tangent frame in which $A^{n+1}$ is diagonal. Applying this quadratic form inequality to the $i$-th basis vector gives
$$
  \left(\frac{A_{ii}^{n+1}}{|\HH|}-\frac{1}{2}\right)^2\le\frac{1}{4}-\mu.
$$
Since
$\mu<\frac{n-1}{n^2}\le\frac{1}{4}$, solving this quadratic inequality yields
$$
  \frac{1-\sqrt{1-4\mu}}{2} \le \frac{A_{ii}^{n+1}}{|\HH|} \le \frac{1+\sqrt{1-4\mu}}{2}.
$$
Moreover, $1-4\mu\ge\frac{(n-2)^2}{n^2}$, and hence $\frac{1-\sqrt{1-4\mu}}{2}\le\frac{1}{n}$.  Subtracting $\frac{1}{n}$ from the lower endpoint gives the lower bound below.  Since all diagonal entries satisfy this lower bound and their sum is zero, each is at most $n-1$ times the absolute value of that lower bound.  Therefore
\begin{align}
  -|\HH|
  \left(
    \frac{1}{n}
    -
    \frac{1-\sqrt{1-4\mu}}{2}
  \right)
  \le
  \mathring A_{ii}^{n+1}
  \le
  (n-1)|\HH|
  \left(
    \frac{1}{n}
    -
    \frac{1-\sqrt{1-4\mu}}{2}
  \right).
  \label{eq:ricci-spectral-box}
\end{align}

Taking the trace of the Ricci lower bound and using the Gauss equation, we obtain
$$
  |A^\perp|^2 \le \left( \frac{n-1}{n}-n\mu \right)|\HH|^2 - |\mathring A^{n+1}|^2.
$$
After substituting this bound into the $\frac{3}{2}|A^\perp|^2$ term in \eqref{eq:ricci-zero-order-first-bound}, it remains to prove the joint spectral estimate
\begin{equation}\label{eq:joint-spectral-estimate}
\begin{aligned}
  &2\max\left\{
    |\mathring A^{n+1}|^2,
    \left(
      \max_i\mathring A_{ii}^{n+1}
      -
      \min_i\mathring A_{ii}^{n+1}
    \right)^2
  \right\}
  -
  \frac{5}{2}|\mathring A^{n+1}|^2\\
  \le&
  \frac{15n}{2(n+3)}
  |\HH|^2
  \left[
    \frac{1}{n}
    -
    \frac{1-\sqrt{1-4\mu}}{2}
  \right]^2.
\end{aligned}
\end{equation}

We first consider the case
$$
  |\mathring A^{n+1}|^2\ge
  \left(\max_i\mathring A_{ii}^{n+1}-\min_i\mathring A_{ii}^{n+1}\right)^2.
$$
In this case, the left-hand side of \eqref{eq:joint-spectral-estimate} becomes
$$
  2|\mathring A^{n+1}|^2 - \frac{5}{2}|\mathring A^{n+1}|^2 = -\frac{1}{2}|\mathring A^{n+1}|^2 \le0.
$$
Since the right-hand side is nonnegative, \eqref{eq:joint-spectral-estimate} follows in this case.

We now consider the remaining case
$$
  |\mathring A^{n+1}|^2<
  \left(\max_i\mathring A_{ii}^{n+1}-\min_i\mathring A_{ii}^{n+1}\right)^2.
$$
Then $\mathring A^{n+1}\ne0$, and its trace-free property implies
$$
  \max_i\mathring A_{ii}^{n+1}>0, \qquad \min_i\mathring A_{ii}^{n+1}<0.
$$
Define the dimensionless ratio
$$
  t=\frac{\max_i\mathring A_{ii}^{n+1}}{-\min_i\mathring A_{ii}^{n+1}}.
$$
From the definition of $t$,
$$
\begin{aligned}
  \max_i\mathring A_{ii}^{n+1}
  &=
  t
  \left(
    -\min_i\mathring A_{ii}^{n+1}
  \right),\\
  \max_i\mathring A_{ii}^{n+1}
  -
  \min_i\mathring A_{ii}^{n+1}
  &=
  (t+1)
  \left(
    -\min_i\mathring A_{ii}^{n+1}
  \right),\\
  \max_i\mathring A_{ii}^{n+1}
  +
  \min_i\mathring A_{ii}^{n+1}
  &=
  (t-1)
  \left(
    -\min_i\mathring A_{ii}^{n+1}
  \right).
\end{aligned}
$$

After relabeling, assume
$$
  \mathring A_{11}^{n+1}=\max_i\mathring A_{ii}^{n+1},
  \qquad \mathring A_{22}^{n+1}=\min_i\mathring A_{ii}^{n+1}.
$$
From $\operatorname{tr}\mathring A^{n+1}=0$, we have
$$
  \sum_{i=3}^n\mathring A_{ii}^{n+1}
  =-\left(\max_i\mathring A_{ii}^{n+1}+\min_i\mathring A_{ii}^{n+1}\right).
$$
The Cauchy--Schwarz inequality therefore gives
$$
  \sum_{i=3}^n\left(\mathring A_{ii}^{n+1}\right)^2
  \ge\frac{1}{n-2}\left(\sum_{i=3}^n\mathring A_{ii}^{n+1}\right)^2
  =\frac{1}{n-2}\left(\max_i\mathring A_{ii}^{n+1}
    +\min_i\mathring A_{ii}^{n+1}\right)^2.
$$
Since $\mathring A^{n+1}$ is diagonal in the chosen frame,
$$
\begin{aligned}
  |\mathring A^{n+1}|^2
  &=
  \left(\max_i\mathring A_{ii}^{n+1}\right)^2
  +
  \left(\min_i\mathring A_{ii}^{n+1}\right)^2
  +
  \sum_{i=3}^n
  \left(\mathring A_{ii}^{n+1}\right)^2\\
  &\ge
  \left(\max_i\mathring A_{ii}^{n+1}\right)^2
  +
  \left(\min_i\mathring A_{ii}^{n+1}\right)^2
  +
  \frac{
    \left(
      \max_i\mathring A_{ii}^{n+1}
      +
      \min_i\mathring A_{ii}^{n+1}
    \right)^2
  }{n-2}\\
  &=
  \left(
    -\min_i\mathring A_{ii}^{n+1}
  \right)^2
  \left[
    t^2+1+\frac{(t-1)^2}{n-2}
  \right].
\end{aligned}
$$
A direct computation gives
$$
  2(t+1)^2-\frac{5}{2}\left(t^2+1+\frac{(t-1)^2}{n-2}\right)
  =\frac{15n}{2(n+3)}-\frac{n+3}{2(n-2)}
  \left(t-\frac{4n-3}{n+3}\right)^2\le\frac{15n}{2(n+3)}.
$$
Together with the preceding lower bound for $|\mathring A^{n+1}|^2$ and \eqref{eq:ricci-spectral-box}, this yields
$$
\begin{aligned}
  &2\left(
    \max_i\mathring A_{ii}^{n+1}
    -
    \min_i\mathring A_{ii}^{n+1}
  \right)^2
  -
  \frac{5}{2}|\mathring A^{n+1}|^2\\
  \le&
  \frac{15n}{2(n+3)}
  \left(-\min_i\mathring A_{ii}^{n+1}\right)^2\\
  \le&
  \frac{15n}{2(n+3)}|\HH|^2
  \left[
    \frac{1}{n}
    -
    \frac{1-\sqrt{1-4\mu}}{2}
  \right]^2.
\end{aligned}
$$
Therefore, \eqref{eq:joint-spectral-estimate} holds in both cases.

Combining the estimate for $|A^\perp|^2$, \eqref{eq:ricci-zero-order-first-bound} and \eqref{eq:joint-spectral-estimate} yields
$$
\begin{aligned}
  &\sum_{\alpha,\beta}
  \left(\sum_{i,j}A_{ij}^\alpha A_{ij}^\beta\right)^2
  +\rho^\perp
  -
  \frac{|A|^2}{|\HH|^2}
  \sum_{\alpha,\beta}
  \left(\sum_{i,j}A_{ij}^\alpha A_{ij}^\beta\right)
  H^\alpha H^\beta\\
  \le&
  |A^\perp|^2|\HH|^2
  \Bigg\{
      \frac{3}{2}
      \left(
      \frac{n-1}{n}-n\mu
    \right)
    +
    \frac{15n}{2(n+3)}
    \left[
      \frac{1}{n}
      -
      \frac{1-\sqrt{1-4\mu}}{2}
    \right]^2
    -
    \frac{1}{n}
  \Bigg\}.
\end{aligned}
$$
Reversing the last inequality yields precisely the asserted estimate.
\end{proof}

The coefficient on the right-hand side of the inequality above vanishes when
\begin{equation}\label{eq:mu-identity}
\begin{aligned}
  \frac{3}{2}
  \left(
    \frac{n-1}{n}-n\mu
  \right)
  +
  \frac{15n}{2(n+3)}
  \left[
    \frac{1}{n}
    -
    \frac{1-\sqrt{1-4\mu}}{2}
  \right]^2
  =
  \frac{1}{n}.
\end{aligned}
\end{equation}
The following proposition determines the corresponding value of $\mu$.

\begin{proposition}
\label{prop:mu-constant}
For each $n\ge3$, the pinching constant $\mu_n$ in Theorem \ref{thm:ricci-improved} is the unique solution of \eqref{eq:mu-identity} satisfying
$$
  \frac{n-2}{n^2}<\mu_n<\frac{n-1}{n^2}.
$$
Moreover,
$$
  \mu_n>\frac{3n-5}{3n^2}.
$$
If $\varepsilon_n=\mu_n-\frac{n-2}{n^2}$, then
$$
  \varepsilon_n=\frac{1}{3n^2}+\frac{20}{9n^5}
  +O\left(\frac{1}{n^6}\right)\qquad(n\to\infty).
$$
    
\end{proposition}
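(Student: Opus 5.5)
Define on the interval $I_n=\left[\frac{n-2}{n^2},\frac{n-1}{n^2}\right]$ the function
$$
  F(\mu)=\frac{3}{2}\left(\frac{n-1}{n}-n\mu\right)+\frac{15n}{2(n+3)}\left[\frac{1}{n}-\frac{1-\sqrt{1-4\mu}}{2}\right]^2-\frac{1}{n},
$$
so that \eqref{eq:mu-identity} is the equation $F(\mu)=0$. Existence and uniqueness of a root will follow from the intermediate value theorem together with strict monotonicity.

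The endpoint values are explicit. At $\mu=\frac{n-2}{n^2}$ we have $\sqrt{1-4\mu}=\frac{n-2}{n}$, so the bracket vanishes and $F=\frac{3}{2}\cdot\frac1n-\frac1n=\frac{1}{2n}>0$. At $\mu=\frac{n-1}{n^2}$ the first term vanishes. The bracket equals $b_1=\frac1n-\frac{1-\sqrt{1-4\mu}}{2}$, and one must check $\frac{15n}{2(n+3)}b_1^2<\frac1n$. Writing $s=\sqrt{1-4\mu}$, we get $b_1=\frac{s}{2}-\frac{n-2}{2n}$. Since $1-4\mu=\frac{(n-2)^2+4}{n^2}$, the bound $s-\frac{n-2}{n}\le \frac{2}{n(n-2)}$ gives $b_1\le\frac{1}{n(n-2)}$. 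The required inequality then reduces to $15\le 2(n+3)(n-2)^2/n^2$ up to constants, which holds for all $n\ge3$ after a direct check; the case $n=3$ should be verified numerically.

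For monotonicity, put $b(\mu)=\frac1n-\frac{1-\sqrt{1-4\mu}}2$, which is $\le0$ on $I_n$ and has $b'(\mu)=-\frac{1}{\sqrt{1-4\mu}}$. Then
$$
  F'(\mu)=-\frac{3n}{2}-\frac{15n}{n+3}\,\frac{b}{\sqrt{1-4\mu}},
$$
and the second term is positive but of size $O(1/n)$ because $|b|\le\frac{1}{n(n-2)}$. Hence $F'<0$ on $I_n$, and the root $\mu_n$ is unique and lies in the open interval.

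For the lower bound $\mu_n>\frac{3n-5}{3n^2}$, note that this point lies in $I_n$ and corresponds to $\varepsilon=\frac{1}{3n^2}$. Since $F$ is decreasing, it suffices to show $F\!\left(\frac{3n-5}{3n^2}\right)>0$. At this point the linear part equals $\frac32\cdot\frac{2}{3n}-\frac1n=0$, so $F$ reduces to $\frac{15n}{2(n+3)}b^2$. This is strictly positive because $b\neq0$ whenever $\mu>\frac{n-2}{n^2}$.

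For the asymptotics, write $\mu=\frac{n-2}{n^2}+\varepsilon$. Then
$$
  1-4\mu=\frac{(n-2)^2}{n^2}-4\varepsilon,\qquad b=-\frac{\varepsilon n}{n-2}+O(\varepsilon^2n^3).
$$
The equation becomes
$$
  \frac{1}{2n}-\frac{3n}{2}\varepsilon+\frac{15n}{2(n+3)}b^2=0.
$$
Substitute $\varepsilon=\frac{1}{3n^2}+\delta$. The zeroth-order terms cancel, and $\frac{3n}{2}\delta=\frac{15n}{2(n+3)}b^2$ with $b^2\approx\frac{1}{9n^2(n-2)^2}$. This gives $\delta\approx\frac{5}{(n+3)}\cdot\frac{1}{9n^2(n-2)^2}$, which is $\frac{5}{9n^5}$ at leading order.

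This conflicts with the claimed coefficient $\frac{20}{9n^5}$, so the expansion of $b$ must be carried to the next order. The main obstacle is this bookkeeping: expanding $\sqrt{1-4\mu}$ carefully in both $1/n$ and $\varepsilon$, and confirming which coefficient is correct. I would do that expansion explicitly and fix any discrepancy by rechecking the normalization of $b$.
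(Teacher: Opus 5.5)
\textbf{There is a genuine gap, and it comes from one algebraic slip.}

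You have swapped the endpoints. The equality $\sqrt{1-4\mu}=\frac{n-2}{n}$ holds at $\mu=\frac{n-1}{n^2}$, not at $\mu=\frac{n-2}{n^2}$, because $1-\frac{4(n-2)}{n^2}=\frac{(n-2)^2+4}{n^2}$. Consequently $b=\frac1n-\frac{1-\sqrt{1-4\mu}}{2}=\frac{n\sqrt{1-4\mu}-(n-2)}{2n}$ is \emph{nonnegative} on $I_n$ and vanishes at the right endpoint. So $F(\frac{n-1}{n^2})=-\frac1n$ and $F(\frac{n-2}{n^2})=\frac1{2n}+\frac{15n}{2(n+3)}b^2>0$.

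Your existence, monotonicity and lower-bound conclusions survive, but by accident:
\begin{itemize}
\item The right-endpoint check you set up is unnecessary. Your crude bound $b\le\frac1{n(n-2)}$ would not even close it at $n=3$, since $2(n+3)(n-2)^2=12<15$.
\item $F'<0$ is immediate once $b\ge0$; no smallness argument is needed.
\item $b\ne0$ at $\frac{3n-5}{3n^2}$ because this point lies below $\frac{n-1}{n^2}$, not because it exceeds $\frac{n-2}{n^2}$.
\end{itemize}

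The argument actually fails at the asymptotic expansion, which you leave unresolved. Your expansion $1-4\mu=\frac{(n-2)^2}{n^2}-4\varepsilon$ drops the term $\frac4{n^2}$. The correct expansion gives $b\approx\frac{1-n^2\varepsilon}{n(n-2)}$. At $\varepsilon\approx\frac1{3n^2}$ this is $b\approx\frac{2}{3n^2}$, not $-\frac{1}{3n^2}$. Hence $\delta=\frac5{n+3}b^2\approx\frac{20}{9n^5}$. The missing factor $4$ is a leading-order error, so carrying the expansion to higher order will not repair it. As written, the proposal does not prove the stated coefficient.

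The paper avoids all of this with the substitution $u=n\sqrt{1-4\mu}-(n-2)$. Then $b=\frac u{2n}$ and $\frac{n-1}n-n\mu=\frac{u(u+2(n-2))}{4n}$. The equation becomes $3(n+8)u^2+6(n+3)(n-2)u-8(n+3)=0$. Its unique positive root is $u=\frac{8(n+3)}{3(\sqrt D+(n+3)(n-2))}=\frac4{3n}(1+O(\frac1n))$, where $D=(n+3)^2(n-2)^2+\frac83(n+8)(n+3)$.

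The paper then uses the identity $\varepsilon_n=\frac1{3n^2}+\frac5{n+3}b^2$, which is your lower-bound argument in exact form. Combined with the formula for $u$, it yields $\varepsilon_n=\frac1{3n^2}+\frac{20}{9n^5}+O(n^{-6})$ with an honest remainder.
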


\begin{proof}
For $\frac{n-2}{n^2}<\mu<\frac{n-1}{n^2}$,  the upper bound for $\mu$ gives $n\sqrt{1-4\mu}-(n-2)>0$. Moreover,
$$
  \frac{1}{n}-\frac{1-\sqrt{1-4\mu}}{2}
  =
  \frac{n\sqrt{1-4\mu}-(n-2)}{2n}
$$
and
$$
\begin{aligned}
  \frac{n-1}{n}-n\mu
  &=
  \frac{n^2(1-4\mu)-(n-2)^2}{4n}\\
  &=
  \frac{
    \bigl(n\sqrt{1-4\mu}-(n-2)\bigr)
    \bigl(n\sqrt{1-4\mu}+(n-2)\bigr)
  }{4n}.
\end{aligned}
$$
Substituting these identities into \eqref{eq:mu-identity}, we obtain the quadratic equation
$$
  3(n+8)
  \bigl(n\sqrt{1-4\mu}-(n-2)\bigr)^2+
  6(n+3)(n-2)
  \bigl(n\sqrt{1-4\mu}-(n-2)\bigr)-8(n+3)=0.
$$
The leading coefficient of this quadratic polynomial is positive, whereas its constant term is negative.  Its two roots therefore have opposite signs, so exactly one of them is positive.  Hence
$$
  n\sqrt{1-4\mu}-(n-2)
  =
  \frac{
    \sqrt{
      (n+3)^2(n-2)^2
      +\frac{8}{3}(n+8)(n+3)
    }
    -(n+3)(n-2)
  }{n+8}.
$$
Since the map
$\mu\mapsto n\sqrt{1-4\mu}-(n-2)$ is strictly decreasing on the prescribed interval, \eqref{eq:mu-identity} has at most one admissible solution.  We define $\mu_n$ by
$$
  \sqrt{1-4\mu_n}
  =
  \frac{
    \sqrt{
      (n+3)^2(n-2)^2
      +\frac{8}{3}(n+8)(n+3)
    }
    +5(n-2)
  }{n(n+8)}.
$$
By construction, $\mu_n$ satisfies \eqref{eq:mu-identity} and
$n\sqrt{1-4\mu_n}-(n-2)>0$.  Hence
$\mu_n<\frac{n-1}{n^2}$.

Set $\varepsilon_n=\mu_n-\frac{n-2}{n^2}$.  Since
$$
  \frac{n-1}{n}-n\mu_n
  =
  \frac{1}{n}-n\varepsilon_n,
$$
it follows from \eqref{eq:mu-identity} that
$$
  \varepsilon_n=\frac{1}{3n^2}
  +\frac{5}{n+3}\left[\frac{1}{n}-\frac{1-\sqrt{1-4\mu_n}}{2}\right]^2.
$$
The second term is positive, and hence
$$
  \mu_n>\frac{n-2}{n^2}+\frac{1}{3n^2}=\frac{3n-5}{3n^2}.
$$
Thus $\mu_n$ is the unique solution in the prescribed interval.

The corresponding value of $\mu_n$ can be written as
\begin{equation}\label{eq:mu-epsilon-exact}
\begin{aligned}
  \mu_n
  &=
  \frac{n-2}{n^2}+\varepsilon_n,\\
  \varepsilon_n
  &=
  \frac{1}{3n^2}
  +
  \frac{80(n+3)}
  {9n^2
    \left[
      \sqrt{
        (n+3)^2(n-2)^2
        +\frac{8}{3}(n+8)(n+3)
      }
      +(n+3)(n-2)
    \right]^2}.
\end{aligned}
\end{equation}
As $n\to\infty$,
$$
  \sqrt{(n+3)^2(n-2)^2+\frac{8}{3}(n+8)(n+3)}
  +(n+3)(n-2)=2n^2+O(n).
$$
It follows from \eqref{eq:mu-epsilon-exact} that
$$
  \varepsilon_n=\frac{1}{3n^2}+\frac{20}{9n^5}
  +O\left(\frac{1}{n^6}\right).
$$
\end{proof}

We now consider the lower bound in Theorem \ref{thm:critical-main}.  In Euclidean codimension two, there is only one shape operator orthogonal to the mean curvature direction, and the resulting Ricci matrix inequality controls all zeroth-order terms in the quotient identity.

\begin{proposition}\label{prop:critical-codim-two-algebra}
Let $n\ge4$, and let $X,Y$ be trace-free real symmetric $n\times n$ matrices satisfying
$$
  \operatorname{Id}+(n-2)X-X^2-Y^2\ge0.
$$
Then
\begin{equation}\label{eq:critical-codim-two-algebra}
  2\inner{X}{Y}^2+2|[X,Y]|^2+|Y|^4
  -\left(n+|X|^2\right)|Y|^2\le0.
\end{equation}
\end{proposition}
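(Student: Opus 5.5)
We work at the level of eigenvalues. Choose an orthonormal basis in which $X=\operatorname{diag}(x_1,\dots,x_n)$ with $\sum_i x_i=0$, and write $Y=(y_{ij})$, a symmetric trace-free matrix. Then
\[
\inner{X}{Y}=\sum_i x_iy_{ii},\qquad |[X,Y]|^2=\sum_{i\ne j}(x_i-x_j)^2y_{ij}^2 .
\]
The hypothesis $\operatorname{Id}+(n-2)X-X^2-Y^2\ge0$ will be used in two ways.

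\textbf{Diagonal (row) constraints.} Evaluating the quadratic form on $e_i$ gives
\[
\sum_j y_{ij}^2\le 1+(n-2)x_i-x_i^2=:s_i .
\]

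\textbf{Trace constraint.} Taking the trace gives
\[
|Y|^2\le n-|X|^2 .
\]

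In particular each $x_i$ lies in the interval where $1+(n-2)x-x^2\ge0$. Combined with $\sum_i x_i=0$, this confines the spectrum of $X$. This is exactly the normalization $X=n\mathring A^{n+1}/|\HH|$, $Y=nA^{n+2}/|\HH|$ of the critical Ricci bound, by the same computation as at the start of the proof of Proposition~\ref{prop:ricci-zero-order}.

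\textbf{Reduction to entrywise inequalities.} Expand the right-hand side as
\[
\bigl(n+|X|^2-|Y|^2\bigr)|Y|^2=\bigl(n+|X|^2-|Y|^2\bigr)\Bigl(\sum_i y_{ii}^2+\sum_{i\ne j}y_{ij}^2\Bigr),
\]
and bound the factor $n-|Y|^2$ from below in two complementary ways. Globally the trace constraint gives $n-|Y|^2\ge|X|^2$. Locally, $n-|Y|^2=\sum_k\bigl(1+(n-2)x_k-x_k^2-\sum_j y_{kj}^2\bigr)+|X|^2$ is a sum of the nonnegative row slacks plus $|X|^2$.

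For the diagonal part, use Cauchy--Schwarz:
\[
2\inner{X}{Y}^2\le 2|X|^2\sum_i y_{ii}^2\le\bigl(n+|X|^2-|Y|^2\bigr)\sum_i y_{ii}^2 .
\]
This is the same Cauchy--Schwarz step as in Lemma~\ref{hd-lem:normal-scalar-algebra}. It then suffices to show, for the off-diagonal part,
\[
2\sum_{i\ne j}(x_i-x_j)^2y_{ij}^2\le\bigl(n+|X|^2-|Y|^2\bigr)\sum_{i\ne j}y_{ij}^2,
\]
and any leftover slack from the diagonal part may be used here if needed.

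\textbf{The commutator term.} This is the main obstacle. The crude bound $(x_i-x_j)^2\le 2(x_i^2+x_j^2)\le 2|X|^2$ is exactly borderline. Equality must occur for the Clifford model, where $X$ has two eigenvalues $\pm c$ of multiplicity $n/2$ and $Y$ is supported on the mixed $(i,j)$ blocks. So I would:
\begin{enumerate}
\item[(a)] Use the spectral confinement from the row constraints, together with $\sum_i x_i=0$, to sharpen $(x_i-x_j)^2\le 2(x_i^2+x_j^2)$ to
\[
(x_i-x_j)^2\le \tfrac12\bigl(n+|X|^2\bigr)-\tfrac12(\text{row slacks})
\]
whenever $y_{ij}\ne0$, using $|Y|^2\le n-|X|^2$ to absorb the $-|Y|^2$ term.
\item[(b)] Pair each off-diagonal entry $y_{ij}^2$ with the row constraints of rows $i$ and $j$. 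Bound $y_{ij}^2$ by $\min\{s_i,s_j\}$ and reduce to a two-variable inequality in $(x_i,x_j)$ plus the aggregate $|X|^2$. This two-variable inequality can then be checked by completing the square, in the same way the quadratic in $t$ was handled in Proposition~\ref{prop:ricci-zero-order}.
\end{enumerate}

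The hypothesis $n\ge4$ should enter through the interval constraint on the $x_i$; it is needed to make $(n-2)x_i$ dominate the gap $(x_i-x_j)^2$. I expect the delicate point to be tracking equality, namely two-block spectrum with $Y$ off-diagonal between blocks, so that no constant is lost.
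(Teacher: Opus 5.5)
\textbf{There is a genuine gap.} Your reduction matches the paper's. You diagonalize $X$, absorb $2\inner{X}{Y}^2$ by Cauchy--Schwarz, and write $n+|X|^2-|Y|^2=2|X|^2+\operatorname{tr}P$ with $P=\operatorname{Id}+(n-2)X-X^2-Y^2$. Everything then hinges on $\sum_{i\ne j}[(x_i-x_j)^2-|X|^2]_+y_{ij}^2\le\frac{\operatorname{tr}P}{2}|Y|^2$. The problem is that your plan for this step uses only the diagonal entries of $P$: the row constraints $\sum_j y_{ij}^2\le s_i$, and the trace constraint, which is their sum. These do not imply the proposition.

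Here is a counterexample to that reduced problem. Take $n=4$, $X=\operatorname{diag}(\frac35,-\frac15,-\frac15,-\frac15)$, $w=e_2+e_3+e_4$, and $Y=u(e_1w^{T}+we_1^{T})$ with $u^2=\frac{14}{25}$. Then:
\begin{itemize}
\item $s=(\frac{46}{25},\frac{14}{25},\frac{14}{25},\frac{14}{25})$, and the squared row norms of $Y$ are $(\frac{42}{25},\frac{14}{25},\frac{14}{25},\frac{14}{25})$;
\item $|Y|^2=\frac{84}{25}\le n-|X|^2=\frac{88}{25}$ and $\inner{X}{Y}=0$;
\item $|[X,Y]|^2=6\cdot\frac{16}{25}\cdot\frac{14}{25}=\frac{1344}{625}$.
\end{itemize}
The left-hand side of \eqref{eq:critical-codim-two-algebra} then equals $\frac{2688+7056-9408}{625}=\frac{336}{625}>0$.

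So every constraint you list holds, including $y_{ij}^2\le\min\{s_i,s_j\}$ and the interval confinement of the $x_i$, yet the conclusion fails. Your step (a) fails here too, since $(x_1-x_j)^2=\frac{16}{25}>\frac{14}{25}=\frac12(n+|X|^2-|Y|^2)$. The same star-shaped configuration also gives counterexamples for $n=5$, e.g.\ $X=\operatorname{diag}(0.55,-0.1375,\dots,-0.1375)$ with $u^2=s_2$. So $n\ge4$ entering through the interval constraint cannot rescue it.

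This $Y$ is excluded only by the off-diagonal part of the hypothesis. On $\operatorname{span}\{e_2,e_3,e_4\}$ one has $P=\frac{14}{25}(\operatorname{Id}-ww^{T})$, which has eigenvalue $-\frac{28}{25}$ in the direction $w$. No two-variable inequality of the kind you propose in (b) can close the argument.

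\textbf{The missing idea} is a way to feed the full positivity $P\ge0$, not just $P_{ii}\ge0$, into the estimate. The paper does this through the commutator $[P,Y]$:
\begin{itemize}
\item Since $[Y^2,Y]=0$, in the eigenbasis of $X$ one has $[P,Y]_{ij}=(x_i-x_j)(n-2-x_i-x_j)y_{ij}$.
\item Computing in an eigenbasis of $P\ge0$ gives $|[P,Y]|^2\le(\operatorname{tr}P)^2|Y|^2$.
\item Together these bound the entries of $Y$ sitting across large eigenvalue gaps of $X$ by the total slack $\operatorname{tr}P$, which is exactly what fails in the example above.
\item For $n=4$ this is combined with the pairwise inequality $(x_i-x_j)^2(2-x_i-x_j)^2\ge2(4-|X|^2)[(x_i-x_j)^2-|X|^2]_+$ and $\operatorname{tr}P\le4-|X|^2$.
\item For $n\ge5$ it is combined with $n-2-x_i-x_j\ge n-3$ and the spectral bound $[(\max_ix_i-\min_ix_i)^2-|X|^2]_+\le\frac{2(n-1)}{(n-2)^2}\le\frac{(n-3)^2}{4}$. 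The argument then splits on whether $\operatorname{tr}P$ exceeds twice this maximal excess.
\end{itemize}

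\textbf{A smaller point:} your equality model is misplaced. For the Clifford shrinker $\mathring A^{n+1}=0$, so $X=0$, $Y=\operatorname{diag}(\pm1)$ and $P=0$. The two-block spectrum belongs to $Y$, not to $X$.
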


\begin{proof}
The proof reduces the desired inequality to controlling a positive off-diagonal contribution after diagonalizing $X$. Introducing the positive semidefinite matrix $P$, we estimate this contribution using $\operatorname{tr}P$ and $[P,Y]$. The cases $n=4$ and $n\ge5$ are treated by pairwise and spectral estimates, respectively.

Choose an orthonormal frame in which
$X=\operatorname{diag}(x_1,\ldots,x_n)$, and set
$$
  P=\operatorname{Id}+(n-2)X-X^2-Y^2\ge0.
$$
Taking the trace gives
$$
  \operatorname{tr}P=n-|X|^2-|Y|^2.
$$
Thus the desired inequality is equivalent to
$$
  \inner{X}{Y}^2+|[X,Y]|^2
  \le\left(|X|^2+\frac{\operatorname{tr}P}{2}\right)|Y|^2.
$$
Since $X$ is diagonal, the Cauchy--Schwarz inequality and the commutator formula give
$$
\begin{aligned}
  \inner{X}{Y}^2
  &\le |X|^2\sum_iY_{ii}^2,\\
  |[X,Y]|^2
  &=\sum_{i\ne j}(x_i-x_j)^2Y_{ij}^2.
\end{aligned}
$$
Writing $[a]_+=\max\{a,0\}$, we consequently have
$$
  \inner{X}{Y}^2+|[X,Y]|^2
  \le|X|^2|Y|^2+\sum_{i\ne j}
  \left[(x_i-x_j)^2-|X|^2\right]_+Y_{ij}^2.
$$
Thus the desired inequality follows from the sufficient estimate
\begin{equation}\label{eq:off-diagonal-excess}
\sum_{i\ne j}\left[(x_i-x_j)^2-|X|^2\right]_+Y_{ij}^2
\le\frac{\operatorname{tr}P}{2}|Y|^2.
\end{equation}

We first consider the case $n=4$.  Since
$$
  P+Y^2=\operatorname{Id}+2X-X^2\ge0,
$$
taking the $i$-th diagonal entry gives $1+2x_i-x_i^2\ge0$, and hence $x_i\ge1-\sqrt2>-\frac{1}{2}$. For distinct indices $i,j$, let $k,l$ denote the remaining two indices.  The trace-free condition gives
$$
  x_i+x_j=-(x_k+x_l)\le1.
$$
The definition of $P$ and the identity $[Y^2,Y]=0$ give
$$
  [P,Y]_{ij}=\bigl[(2x_i-x_i^2)-(2x_j-x_j^2)\bigr]Y_{ij}=(x_i-x_j)(2-x_i-x_j)Y_{ij}.
$$
For every $i\ne j$, the key pairwise estimate is
\begin{equation}\label{eq:four-dimensional-pair-estimate}
  (x_i-x_j)^2(2-x_i-x_j)^2
  \ge2(4-|X|^2)\left[(x_i-x_j)^2-|X|^2\right]_+.
\end{equation}

Since $x_k+x_l=-(x_i+x_j)$, the Cauchy--Schwarz inequality gives
$$
  |X|^2=x_i^2+x_j^2+x_k^2+x_l^2\ge
  (x_i+x_j)^2+\frac{(x_i-x_j)^2}{2}.
$$
Moreover, $\operatorname{tr}P\ge0$ gives $|X|^2\le4$. If $(x_i-x_j)^2\le|X|^2$, the claim is immediate. We may therefore assume that $(x_i-x_j)^2>|X|^2$, which also implies $(x_i-x_j)^2>2(x_i+x_j)^2$.

For fixed $x_i+x_j$ and $x_i-x_j$, the function
$$
  r\longmapsto (x_i-x_j)^2(2-x_i-x_j)^2
  -2(4-r)\bigl((x_i-x_j)^2-r\bigr)
$$
is strictly increasing whenever $r<(x_i-x_j)^2$ and $r\le4$, since its derivative is $2((x_i-x_j)^2+4-2r)>0$. Using the preceding lower bound for $|X|^2$, we obtain
$$
\begin{aligned}
  &(x_i-x_j)^2(2-x_i-x_j)^2
  -2(4-|X|^2)\bigl((x_i-x_j)^2-|X|^2\bigr)\\
  \ge&\frac{(x_i-x_j)^4}{2}
  +\bigl((x_i+x_j)^2-4(x_i+x_j)\bigr)(x_i-x_j)^2+8(x_i+x_j)^2-2(x_i+x_j)^4.
\end{aligned}
$$
If $x_i+x_j\le0$, the last expression is increasing as a function of $(x_i-x_j)^2$ on $(x_i-x_j)^2\ge2(x_i+x_j)^2$, and hence it is bounded below by $2(x_i+x_j)^2(x_i+x_j-2)^2$. If $0<x_i+x_j\le1$, completing the square gives
$$
\begin{aligned}
  &\frac{(x_i-x_j)^4}{2}
  +\bigl((x_i+x_j)^2-4(x_i+x_j)\bigr)(x_i-x_j)^2
  +8(x_i+x_j)^2-2(x_i+x_j)^4\\
  =&\frac{1}{2}
  \left[(x_i-x_j)^2-(x_i+x_j)(4-x_i-x_j)\right]^2+\frac{(x_i+x_j)^3}{2}\bigl(8-5(x_i+x_j)\bigr)
  \ge0.
\end{aligned}
$$
Consequently, multiplying \eqref{eq:four-dimensional-pair-estimate} by $Y_{ij}^2$ and summing over $i\ne j$, we obtain
$$
  |[P,Y]|^2\ge2(4-|X|^2)
  \sum_{i\ne j}
  \left[(x_i-x_j)^2-|X|^2\right]_+Y_{ij}^2.
$$
If $\operatorname{tr}P>0$, diagonalizing $P$ and using its nonnegative eigenvalues gives
$$
  |[P,Y]|^2\le(\operatorname{tr}P)^2|Y|^2.
$$
Since $\operatorname{tr}P\le4-|X|^2$, the preceding two estimates give
$$
  \sum_{i\ne j}
  \left[(x_i-x_j)^2-|X|^2\right]_+Y_{ij}^2
  \le\frac{(\operatorname{tr}P)^2}{2(4-|X|^2)}|Y|^2
  \le\frac{\operatorname{tr}P}{2}|Y|^2.
$$
This estimate also holds when $\operatorname{tr}P=0$.  Indeed, then $P=0$; the preceding lower bound makes the sum vanish if $4-|X|^2>0$, whereas if $4-|X|^2=0$, the trace identity gives $Y=0$.  Thus the sufficient estimate above holds for $n=4$, and the desired inequality follows.

We next consider the case $n\ge5$.  Since
$$
  P+Y^2=\operatorname{Id}+(n-2)X-X^2\ge0,
$$
we have $1+(n-2)x_i-x_i^2\ge0$, and hence
$$
  x_i\ge\frac{n-2-\sqrt{(n-2)^2+4}}{2}>-\frac{1}{n-2}.
$$
Using $\sum_i x_i=0$, for $i\ne j$ we obtain
$$
  x_i+x_j=-\sum_{k\ne i,j}x_k\le1,
$$
so that $n-2-x_i-x_j\ge n-3$.  The trace-free condition also gives
$$
  0\le\max_i x_i\le-(n-1)\min_i x_i\le\frac{n-1}{n-2}.
$$
Since $|X|^2\ge(\max_i x_i)^2+(\min_i x_i)^2$, these bounds imply
$$
  \left[\left(\max_i x_i-\min_i x_i\right)^2-|X|^2\right]_+
  \le2\left(\max_i x_i\right)\left(-\min_i x_i\right)
  \le\frac{2(n-1)}{(n-2)^2}.
$$
For $n\ge5$, the elementary inequality $\frac{8(n-1)}{(n-2)^2}\le(n-3)^2$ therefore yields
$$
  4\left[\left(\max_i x_i-\min_i x_i\right)^2-|X|^2\right]_+
  \le(n-3)^2.
$$
Suppose first that
$$
  \operatorname{tr}P\ge
  2\left[\left(\max_i x_i-\min_i x_i\right)^2-|X|^2\right]_+.
$$
Then
$$
  \sum_{i\ne j}
  \left[(x_i-x_j)^2-|X|^2\right]_+Y_{ij}^2\le
  \left[\left(\max_i x_i-\min_i x_i\right)^2-|X|^2\right]_+|Y|^2
  \le\frac{\operatorname{tr}P}{2}|Y|^2.
$$
The required inequality now follows.

We next consider the complementary case
$$
  \operatorname{tr}P<
  2\left[\left(\max_i x_i-\min_i x_i\right)^2-|X|^2\right]_+.
$$
The preceding spectral estimate gives $\operatorname{tr}P<\frac{(n-3)^2}{2}$.  Moreover,
$$
  [P,Y]_{ij}
  =(x_i-x_j)(n-2-x_i-x_j)Y_{ij},
$$
and hence
$$
  (n-3)^2|[X,Y]|^2\le|[P,Y]|^2.
$$
Choose a frame in which $P$ is diagonal, with eigenvalues $p_i\ge0$. Then
$$
  |[P,Y]|^2=\sum_{i,j}(p_i-p_j)^2Y_{ij}^2
  \le(\operatorname{tr}P)^2|Y|^2,
$$
and therefore
$$
  |[X,Y]|^2\le\frac{(\operatorname{tr}P)^2}{(n-3)^2}|Y|^2
  \le\frac{\operatorname{tr}P}{2}|Y|^2.
$$
Together with $\inner{X}{Y}^2\le|X|^2|Y|^2$, this proves the required inequality in the complementary case and completes the proof.
\end{proof}

\section{Proofs of the main theorems}\label{sec:main-proofs}

We now combine the auxiliary geometric results of Section~2 with the algebraic estimates of Section~3.  The form of the zeroth-order estimate depends on the dimension: for $n=2$, the identities $\operatorname{Ric}=Kg$ and the algebra of trace-free symmetric $2\times2$ matrices give direct control, while for $n\ge3$ the required estimate is supplied by Proposition \ref{prop:ricci-zero-order}.

\begin{proof}[Proof of Theorem \ref{thm:ricci-improved}]
For $n=2$, $\mu_2=\frac{1}{6}<\frac{1}{2}$, while for $n\ge3$, Proposition \ref{prop:mu-constant} gives $0<\mu_n<\frac{1}{n}$. Lemma \ref{lem:mean curvature-dichotomy} therefore shows that either $x(M)$ is a linear subspace through the origin, in which case the conclusion follows, or $|\HH|>0$ on $M$.  We henceforth consider the latter case.  Since $\mu_n>0$, Lemma \ref{lem:ricci-properness} shows that the immersion is proper.  By Cheng--Zhou \cite[Theorem 4.1]{ChengZhou2013}, it has polynomial volume growth, so the weighted Liouville lemma applies below.

We first consider $n=2$.  Since $\operatorname{Ric}=Kg$, the Ricci lower bound and the Gauss equation yield
$$
  0\le\frac{|\mathring A|^2}{|\HH|^2}
  =\frac{1}{2}-\frac{2K}{|\HH|^2}\le\frac{1}{6}.
$$
For trace-free symmetric $2\times2$ matrices $B,C$, the identity
$$
  \inner{B}{C}^2+\frac{1}{2}|[B,C]|^2=|B|^2|C|^2
$$
reduces the quotient identity \eqref{eq:quotient} to
$$
  \frac{1}{2}\left(\mathcal L_\lambda
    +2\inner{\nabla\log|\HH|^2}{\nabla\cdot}\right)
  \frac{|\mathring A|^2}{|\HH|^2}
  =\frac{1}{|\HH|^2}\left(|\nabla\mathring A|^2
    -\frac{|\mathring A|^2}{|\HH|^2}|\nabla^\perp\HH|^2
    +2K|A^\perp|^2-\frac{1}{2}\rho^\perp\right).
$$
In the adapted normal frame $e_3=\HH/|\HH|$, the B\"ottcher--Wenzel and DDVV inequalities \cite{BottcherWenzel2005,GeTang2008,Lu2011} give
$$
  \rho^\perp\le\left(4|\mathring A^3|^2+|A^\perp|^2\right)|A^\perp|^2.
$$
Combined with the Gauss equation and the quotient bound, this yields
$$
  2K|A^\perp|^2-\frac{1}{2}\rho^\perp
  \ge\frac{3}{2}|A^\perp|^4.
$$

Lemma \ref{lem:gradient-estimate} and the quotient bound show that the gradient contribution is nonnegative.  Hence
$$
  \frac{1}{2}\left(\mathcal L_\lambda
    +2\inner{\nabla\log|\HH|^2}{\nabla\cdot}\right)
  \frac{|\mathring A|^2}{|\HH|^2}\ge\frac{3|A^\perp|^4}{2|\HH|^2}\ge0.
$$
The quotient is bounded, so Lemma \ref{lem:weighted-liouville} shows that it is constant.  The left-hand side therefore vanishes, and hence $A^\perp\equiv0$. Substituting into the exact quotient identity and applying Lemma \ref{lem:gradient-estimate} together with the quotient bound gives
$$
  0=|\nabla\mathring A|^2
  -\frac{|\mathring A|^2}{|\HH|^2}|\nabla^\perp\HH|^2
  \ge\left(\frac{1}{4}-\frac{|\mathring A|^2}{|\HH|^2}\right)|\nabla^\perp\HH|^2
  \ge\frac{1}{12}|\nabla^\perp\HH|^2\ge0.
$$
Hence $\nabla^\perp\HH\equiv0$.  Lemma \ref{lem:spherical-reduction} now shows that $M$ is minimal in $\mathbb S^{p+1}\left(\sqrt{\frac{2}{\lambda}}\right)$. Under this reduction, $A^\perp$ is the second fundamental form in the shrinker sphere, so $A^\perp\equiv0$ makes the spherical immersion totally geodesic.  By completeness, it is a covering of $\mathbb S^2\left(\sqrt{\frac{2}{\lambda}}\right)$; since the target sphere is simply connected, the covering is one-sheeted.  This proves the theorem for $n=2$.

We now assume $n\ge3$. For $n=3$, Proposition \ref{prop:mu-constant} yields $\mu_3>\frac{4}{27}>\frac{2}{15}$, so
$$
  \frac{|\mathring A|^2}{|\HH|^2} < \frac{4}{15} = \frac{2(n-1)}{n(n+2)}.
$$
For $n\ge4$, the Ricci lower bound and the Gauss equation, together with the same proposition, give
$$
  \frac{|\mathring A|^2}{|\HH|^2} < \frac{1}{n} \le \frac{2(n-1)}{n(n+2)}.
$$
Lemma \ref{lem:gradient-estimate} now yields
$$
  |\nabla\mathring A|^2
  -\frac{|\mathring A|^2}{|\HH|^2}|\nabla^\perp\HH|^2
  \ge\left(\frac{2(n-1)}{n(n+2)}-\frac{|\mathring A|^2}{|\HH|^2}\right)
  |\nabla^\perp\HH|^2\ge0,
$$
where the coefficient in parentheses is strictly positive.  By Proposition \ref{prop:mu-constant}, $\mu_n$ lies in the interval required by Proposition \ref{prop:ricci-zero-order} and satisfies \eqref{eq:mu-identity}. Combining that proposition and Lemma \ref{lem:gradient-estimate} with the quotient identity \eqref{eq:quotient}, we obtain
$$
  \frac{1}{2}\left(\mathcal L_\lambda
    +2\inner{\nabla\log|\HH|^2}{\nabla\cdot}\right)
  \frac{|\mathring A|^2}{|\HH|^2}
  \ge\left(\frac{2(n-1)}{n(n+2)}
    -\frac{|\mathring A|^2}{|\HH|^2}\right)
  \frac{|\nabla^\perp\HH|^2}{|\HH|^2}\ge0.
$$
The quotient is bounded by the trace estimate, so Lemma \ref{lem:weighted-liouville} shows that it is constant.  The left-hand side therefore vanishes, and the strict positivity of the coefficient gives $\nabla^\perp\HH\equiv0$.

Lemma \ref{lem:spherical-reduction} shows that $M$ is compact and minimal in $\mathbb S^{n+p-1}\left(\sqrt{\frac{n}{\lambda}}\right)$, with $|\HH|^2=n\lambda$.  After rescaling the ambient sphere to the unit sphere, the Ricci lower bound becomes
$$
  \operatorname{Ric} \ge n^2\mu_n g > (n-2)g.
$$
If $M$ is nonorientable, let $\pi:\widetilde M\to M$ be its orientable double cover and $\widetilde x=x\circ\pi$ the lifted immersion.  By Xu--Gu \cite[Corollary 3.4]{XuGu2013}, an oriented compact minimal submanifold $N^n\subset\mathbb S^{n+q}(1)$, $n\ge3$, satisfying $\operatorname{Ric}_N>(n-2)g$ is totally geodesic. It follows that $\widetilde x(\widetilde M)$ is a totally geodesic unit $n$-sphere.

The map $\widetilde x:\widetilde M\to\mathbb S^n(1)$ is a local isometry and, since $\widetilde M$ is compact, a covering map. As $n\ge3$, the target sphere is simply connected; hence this covering is one-sheeted.  In particular, the deck transformation of a nontrivial orientable double cover would fix $\widetilde x$, contradicting its injectivity.  Thus the double cover is trivial.  Scaling back, $x(M)$ is $\mathbb S^n\left(\sqrt{\frac{n}{\lambda}}\right)$ in an $(n+1)$-dimensional linear subspace of $\R^{n+p}$.
\end{proof}

At the critical coefficient, constancy of the quotient does not force $\nabla^\perp\HH=0$ when $n=4$. The following equality lemma instead deduces $x^T=0$ from the vanishing of the gradient term, yielding the spherical reduction.
\begin{lemma}\label{lem:critical-gradient-equality}
Let $x:M^n\to\R^{n+p}$, $n\ge2$, be a self-shrinker immersion with $|\HH|>0$ and positive Ricci curvature.  Suppose that $\frac{|\mathring A|^2}{|\HH|^2}$ is constant, satisfying
$$
  \frac{|\mathring A|^2}{|\HH|^2}\le\frac{2(n-1)}{n(n+2)}
\mbox{\,\,\,\,\,\,and\,\,\,\,\,\,}
  |\nabla\mathring A|^2
  -\frac{|\mathring A|^2}{|\HH|^2}|\nabla^\perp\HH|^2=0.
$$
Then $x^T\equiv0$.
\end{lemma}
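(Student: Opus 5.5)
The plan is to split according to whether the constant value $c:=\frac{|\mathring A|^2}{|\HH|^2}$ is strictly below or equal to $\frac{2(n-1)}{n(n+2)}$. In both cases I will reduce to the identity
\[
  \operatorname{Ric}(x^T,x^T)=\inner{\HH}{A(x^T,x^T)}-\sum_i|A(x^T,e_i)|^2=0 ,
\]
and then use positivity of the Ricci curvature to conclude $x^T=0$. Throughout I use the derivative of the self-shrinker equation, $\nabla^\perp_{e_k}\HH=\lambda A(e_k,x^T)$, exactly as in Lemma \ref{lem:mean curvature-dichotomy} and Lemma \ref{lem:spherical-reduction}.

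\emph{Case $c<\frac{2(n-1)}{n(n+2)}$.} Lemma \ref{lem:gradient-estimate} gives
\[
  0=|\nabla\mathring A|^2-c|\nabla^\perp\HH|^2\ge\Bigl(\tfrac{2(n-1)}{n(n+2)}-c\Bigr)|\nabla^\perp\HH|^2 ,
\]
so $\nabla^\perp\HH\equiv0$. Then $A(\cdot,x^T)=0$, and hence $\operatorname{Ric}(x^T,x^T)=0$, which forces $x^T\equiv0$ as in Lemma \ref{lem:spherical-reduction}.

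\emph{Case $c=\frac{2(n-1)}{n(n+2)}$.} Now equality holds in the Andrews--Baker estimate at every point. I will recall its proof. One decomposes $\nabla\mathring A$ orthogonally into the part determined by $\nabla^\perp\HH$ and a remainder, and equality means the remainder vanishes. Writing $\HH_k:=\nabla^\perp_{e_k}\HH$, this gives pointwise
\[
  \nabla_k\mathring A_{ij}=\frac{1}{n+2}\Bigl(\HH_i\delta_{jk}+\HH_j\delta_{ik}-\frac{2}{n}\HH_k\delta_{ij}\Bigr).
\]
This formula also holds trivially at points where $\nabla^\perp\HH=0$. Checking this equality characterization carefully, including the normalization constants, is the main technical point.

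Next I differentiate the constancy relation $|\mathring A|^2=c|\HH|^2$, which gives $\sum_{i,j}\inner{\mathring A_{ij}}{\nabla_k\mathring A_{ij}}=c\inner{\HH}{\HH_k}$. Substituting the equality form and using that $\mathring A$ is trace-free, this becomes
\[
  \frac{2}{n+2}\sum_i\inner{\mathring A_{ik}}{\HH_i}=c\inner{\HH}{\HH_k}.
\]
Now insert $\HH_i=\lambda A(e_i,x^T)$ and contract with $x^T_k$. Set $v=x^T$, $S=\sum_i|A(v,e_i)|^2$ and $P=\inner{\HH}{A(v,v)}$. Since $\mathring A(v,e_i)=A(v,e_i)-\frac{\HH}{n}v_i$, the identity reads
\[
  \frac{2\lambda}{n+2}\Bigl(S-\frac{P}{n}\Bigr)=c\lambda P .
\]
With $c=\frac{2(n-1)}{n(n+2)}$ this rearranges to $S=P\bigl(\frac1n+\frac{n-1}{n}\bigr)=P$. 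By the Gauss equation, $\operatorname{Ric}(v,v)=P-S=0$, so positive Ricci curvature yields $x^T\equiv0$.

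The only delicate step is the equality characterization in Lemma \ref{lem:gradient-estimate}; everything after it is a pointwise contraction. As a fallback, I would verify directly that the tensor $E_{ijk}$ given by the displayed formula is the orthogonal projection of $\nabla\mathring A$ onto the fully symmetric tensors built from $\nabla^\perp\HH$. This follows from the Codazzi symmetry together with the trace identity $\sum_k\nabla_k\mathring A_{ik}=\frac{n-1}{n}\HH_i$. It gives $|\nabla\mathring A|^2=|E|^2+|\nabla\mathring A-E|^2$ with $|E|^2=\frac{2(n-1)}{n(n+2)}|\nabla^\perp\HH|^2$, so equality forces $\nabla\mathring A=E$.
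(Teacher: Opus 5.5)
Your proposal is correct and follows essentially the same route as the paper: the same split at $c=\frac{2(n-1)}{n(n+2)}$, the same Andrews--Baker equality characterization (your formula for $\nabla\mathring A$ becomes the paper's $\nabla_k^\perp A_{ij}=\frac{1}{n+2}(\delta_{jk}\nabla_i^\perp\HH+\delta_{ik}\nabla_j^\perp\HH+\delta_{ij}\nabla_k^\perp\HH)$ after adding $\frac1n\delta_{ij}\nabla_k^\perp\HH$), and the same comparison with the derivative of the constant quotient. The differences are cosmetic: you contract with $x^T$ to get $\operatorname{Ric}(x^T,x^T)=0$ directly, where the paper gets $\operatorname{Ric}(x^T,e_k)=0$ for each $k$; and in your fallback the tensor $E$ is not fully symmetric in $i,j,k$ because of the $-\frac{2}{n}\delta_{ij}$ term, although the orthogonality $\langle\nabla\mathring A-E,E\rangle=0$ that you actually need does hold by the divergence and trace-free identities.
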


\begin{proof}
By Lemma \ref{lem:gradient-estimate},
$$
  0=|\nabla\mathring A|^2
  -\frac{|\mathring A|^2}{|\HH|^2}|\nabla^\perp\HH|^2
  \ge\left(\frac{2(n-1)}{n(n+2)}-\frac{|\mathring A|^2}{|\HH|^2}\right)
  |\nabla^\perp\HH|^2\ge0.
$$
Since the quotient is constant, suppose first that
$$
  \frac{|\mathring A|^2}{|\HH|^2}<\frac{2(n-1)}{n(n+2)}.
$$
The strict positivity of the coefficient above forces $\nabla^\perp\HH\equiv0$. Differentiating the self-shrinker equation gives $A(X,x^T)=0$ for every tangent vector $X$.  The Gauss equation then yields $\operatorname{Ric}(x^T,\cdot)=0$, and the positivity of the Ricci curvature implies $x^T\equiv0$.

It remains to consider
$$
  \frac{|\mathring A|^2}{|\HH|^2}=\frac{2(n-1)}{n(n+2)}.
$$
The assumed vanishing of the gradient contribution then gives equality in Lemma \ref{lem:gradient-estimate}.  Fix a point and choose local orthonormal tangent and normal frames that are geodesic and parallel at that point, respectively.  The orthogonal decomposition used in the gradient estimate is
$$
  |\nabla\mathring A|^2
  -\frac{2(n-1)}{n(n+2)}|\nabla^\perp\HH|^2=
  \sum_{i,j,k}
  \left|
    \nabla_k^\perp A_{ij}
    -\frac{1}{n+2}
    \left(
      \delta_{jk}\nabla_i^\perp\HH
      +\delta_{ik}\nabla_j^\perp\HH
      +\delta_{ij}\nabla_k^\perp\HH
    \right)
  \right|^2.
$$
Hence equality in the gradient estimate implies
$$
  \nabla_k^\perp A_{ij}=\frac{1}{n+2}\left(
    \delta_{jk}\nabla_i^\perp\HH+\delta_{ik}\nabla_j^\perp\HH
    +\delta_{ij}\nabla_k^\perp\HH\right).
$$
Contracting with $\mathring A$ and using its trace-free property gives
$$
  \inner{\nabla_k\mathring A}{\mathring A}
  =\frac{2}{n+2}\sum_i\inner{\nabla_i^\perp\HH}{\mathring A_{ik}}.
$$
On the other hand, differentiating the constant quotient gives
$$
  \inner{\nabla_k\mathring A}{\mathring A}
  =\frac{2(n-1)}{n(n+2)}\inner{\nabla_k^\perp\HH}{\HH}.
$$
Comparing these identities gives
$$
  \sum_i\inner{\nabla_i^\perp\HH}{\mathring A_{ik}}
  =\frac{n-1}{n}\inner{\nabla_k^\perp\HH}{\HH}.
$$
Using $\mathring A_{ik}=A_{ik}-\frac{1}{n}\delta_{ik}\HH$ and then differentiating the self-shrinker equation, we obtain
$$
\begin{aligned}
  0
  &=
  \sum_i
  \inner{\nabla_i^\perp\HH}{\mathring A_{ik}}
  -\frac{n-1}{n}
  \inner{\nabla_k^\perp\HH}{\HH}\\
  &=
  \sum_i
  \inner{\nabla_i^\perp\HH}{A_{ik}}
  -\inner{\nabla_k^\perp\HH}{\HH}\\
  &=
  \lambda
  \left(
    \sum_i\inner{A(e_i,x^T)}{A_{ik}}
    -\inner{A(e_k,x^T)}{\HH}
  \right)\\
  &=-\lambda\operatorname{Ric}(x^T,e_k),
\end{aligned}
$$
where the last equality follows from the Gauss equation.  Thus $\operatorname{Ric}(x^T,\cdot)=0$, and hence $x^T\equiv0$.
\end{proof}

\begin{proof}[Proof of Theorem \ref{thm:critical-main}]
Since $0<\frac{n-2}{n^2}<\frac{1}{n}$, Lemma \ref{lem:ricci-properness} gives properness.  By Cheng--Zhou \cite[Theorem 4.1]{ChengZhou2013}, the immersion has polynomial volume growth, so the volume growth hypothesis in Lemma \ref{lem:weighted-liouville} is satisfied. Lemma \ref{lem:mean curvature-dichotomy} shows that either $x(M)$ is a linear subspace through the origin, in which case the conclusion follows, or $|\HH|>0$ on $M$.  We thus assume $|\HH|>0$ on $M$.  Tracing the Ricci lower bound and applying the Gauss equation with the trace-free decomposition yields $\frac{|\mathring A|^2}{|\HH|^2}\le\frac{1}{n}$.

In the case of $p=1$, we have $A^\perp\equiv0$, and by \eqref{hd-eq:algebra}, the zeroth-order expression
subtracted in the quotient identity \eqref{eq:quotient} vanishes.

In the case of $p=2$, choose the normal frame adapted to the mean curvature vector.  Since there is only one normal direction orthogonal to $\HH$, the only remaining shape operator is $A^{n+2}$.  The Gauss equation shows that the Ricci lower bound is equivalent to
$$
  \operatorname{Id}+\frac{n(n-2)}{|\HH|}\mathring A^{n+1}
  -\frac{n^2}{|\HH|^2}\left[(\mathring A^{n+1})^2+(A^{n+2})^2\right]\ge0.
$$
Applying Proposition \ref{prop:critical-codim-two-algebra} to $\frac{n\mathring A^{n+1}}{|\HH|}$ and $\frac{nA^{n+2}}{|\HH|}$, and expanding, we obtain
$$
\begin{aligned}
  &\sum_{\alpha,\beta}
  \left(\sum_{i,j}A_{ij}^\alpha A_{ij}^\beta\right)^2
  +\rho^\perp
  -
  \frac{|A|^2}{|\HH|^2}
  \sum_{\alpha,\beta}
  \left(\sum_{i,j}A_{ij}^\alpha A_{ij}^\beta\right)
  H^\alpha H^\beta\\
  =&
  2\inner{\mathring A^{n+1}}{A^{n+2}}^2
  +2|[\mathring A^{n+1},A^{n+2}]|^2
  +|A^{n+2}|^4-
  \left(
    |\mathring A^{n+1}|^2+\frac{|\HH|^2}{n}
  \right)|A^{n+2}|^2
  \le0.
\end{aligned}
$$

In the case of $p\ge3$, Lemma \ref{hd-lem:normal-scalar-algebra} and the Gauss equation give
$$
\begin{aligned}
  &\sum_{\alpha,\beta}
  \left(\sum_{i,j}A_{ij}^\alpha A_{ij}^\beta\right)^2
  +\rho^\perp
  -
  \frac{|A|^2}{|\HH|^2}
  \sum_{\alpha,\beta}
  \left(\sum_{i,j}A_{ij}^\alpha A_{ij}^\beta\right)
  H^\alpha H^\beta\\
  \le&
  \rho^\perp
  -
  \left(
    R-\frac{n-2}{n}|\HH|^2
  \right)|A^\perp|^2
  \le0.
\end{aligned}
$$
The last inequality is precisely the normal curvature assumption in the case $p\ge3$.  Moreover, the trace estimate above gives
$
  R-\frac{n-2}{n}|\HH|^2
  =\frac{1}{n}|\HH|^2-|\mathring A|^2\ge0
$, so the coefficient of $|A^\perp|^2$ is nonnegative.

In all cases, the displayed zeroth-order expression is nonpositive and hence contributes a nonnegative term to the right-hand side of \eqref{eq:quotient}.  Combining this observation with Lemma \ref{lem:gradient-estimate} and the bound $\frac{|\mathring A|^2}{|\HH|^2}\le\frac{1}{n}$, we obtain
$$
\begin{aligned}
  \frac{1}{2}
  \left(
    \mathcal L_\lambda
    +
    2\inner{\nabla\log|\HH|^2}{\nabla\cdot}
  \right)
  \frac{|\mathring A|^2}{|\HH|^2}\ge\left(
    \frac{2(n-1)}{n(n+2)}
    -
    \frac{|\mathring A|^2}{|\HH|^2}
  \right)\frac{|\nabla^\perp\HH|^2}{|\HH|^2}\ge
  \frac{n-4}{n(n+2)}
  \frac{|\nabla^\perp\HH|^2}{|\HH|^2}
  \ge0.
\end{aligned}
$$
The quotient is bounded, so Lemma \ref{lem:weighted-liouville} shows that it is constant.  To include the possible equality case when $n=4$, we return to the exact quotient identity \eqref{eq:quotient}.  Its left-hand side now vanishes, while its zeroth-order contribution is nonnegative; therefore the gradient contribution satisfies
$$
  |\nabla\mathring A|^2
  -\frac{|\mathring A|^2}{|\HH|^2}|\nabla^\perp\HH|^2\le0.
$$
On the other hand, Lemma \ref{lem:gradient-estimate} and $\frac{|\mathring A|^2}{|\HH|^2}\le\frac{1}{n}\le\frac{2(n-1)}{n(n+2)}$ show that this gradient contribution is nonnegative.  It must therefore vanish identically.  The quotient is constant and the Ricci curvature is positive, so all the hypotheses of Lemma \ref{lem:critical-gradient-equality} are satisfied.  Consequently, $x^T\equiv0$, so $|x|$ is constant and $x=x^\perp$.  The identity
$$
  \frac{1}{2}\Delta|x|^2=n+\inner{x}{\HH}=n-\lambda|x|^2
$$
then gives $|x|^2=\frac{n}{\lambda}$.  Thus $M$ is minimal in $\mathbb S^{n+p-1}\left(\sqrt{\frac{n}{\lambda}}\right)$ and $|\HH|^2=n\lambda$.  Since $x$ is proper and its image is contained in this compact sphere, $M$ is compact.  After rescaling the ambient sphere to the unit sphere, the Ricci lower bound becomes
$$
  \operatorname{Ric}\ge(n-2)g.
$$
Since $n\ge4$, the lower bound is strictly positive.  Lemma 2.3 of Xu--Gu \cite{XuGu2013} gives $\pi_1(M)=0$, so $M$ is orientable and Ejiri's classification theorem \cite{Ejiri1979} applies.  The alternatives are a totally geodesic sphere, a Clifford hypersurface, and the Veronese embedding of $\mathbb{CP}^2$.  In the Clifford case, $M$ is a minimal hypersurface in the unit sphere of the form $\mathbb S^m\left(\sqrt{\frac{m}{n}}\right)
  \times
  \mathbb S^{n-m}\left(\sqrt{\frac{n-m}{n}}\right)$ for some $1\le m\le n-1$.  Since
$\operatorname{Ric}_{\mathbb S^k(r)}=\frac{k-1}{r^2}g$, the Ricci curvatures along the two factors are $\frac{n(m-1)}{m}$ and $\frac{n(n-m-1)}{n-m}$. The condition $\operatorname{Ric}\ge(n-2)g$ yields $m\ge\frac n2$ and $n-m\ge\frac n2$, and hence $m=\frac n2$.  Thus $n$ is even, and the Clifford hypersurface is $\mathbb S^{\frac n2}\left(\frac1{\sqrt2}\right)\times\mathbb S^{\frac n2}\left(\frac1{\sqrt2}\right)$. The Veronese model has real dimension four and can therefore occur only when $n=4$. In this case, a smaller ambient sphere excludes it, while otherwise the additional normal curvature hypothesis applies. Indeed, on the Veronese model one has $R-\frac{n-2}{n}|\HH|^2=0$, so the hypothesis forces $\rho^\perp\equiv0$, contradicting the nonflat normal bundle.

Rescaling back yields the two submanifolds stated in the theorem.  The round sphere is simply connected; in the Clifford case, $n\ge4$ and both factors have dimension $\frac n2\ge2$, so this model is simply connected as well.  Hence no nontrivial covering occurs.
\end{proof}

\section*{Acknowledgments}
The authors would like to thank Professors Hongwei Xu and Entao Zhao for their encouragement and stimulating discussions.

\end{document}